\newtheorem{theorem}{Theorem}[section]
\newtheorem{theoremletter}{Theorem}
\newtheorem{proposition}[theorem]{Proposition}
\newtheorem{corollary}[theorem]{Corollary}
\newtheorem{lemma}[theorem]{Lemma}
\theoremstyle{definition}
\newtheorem{definition}[theorem]{Definition}
\newtheorem{example}[theorem]{Example}
\newtheorem{question}[theorem]{Question}
\newtheorem{remark}[theorem]{Remark}
\newcommand{\CC}{\mathbb{C} }
\newcommand{\RR}{\mathbb{R} }
\newcommand{\ZZ}{\mathbb{Z} }
\newcommand{\cA}{\mathcal{A} }
\newcommand{\cM}{\mathcal{M} }
\newcommand{\cS}{\mathcal{S} }
\newcommand{\cU}{\mathcal{U} }
\newcommand{\rL}{\mathrm{L} }
\newcommand{\rM}{\mathrm{M} }
\newcommand{\MRY}{\mathrm{MRY}}
\newcommand{\LRY}{\mathrm{LRY}}
\title[Cluster algebras and skein algebras]{Cluster algebras and skein algebras for surfaces}
\author{Hiroaki Karuo}
\address{Department of Mathematics, Gakushuin University, Mejiro, Toshima-ku, Tokyo, Japan}
\email{hiroaki.karuo@gakushuin.ac.jp}
\author{Han-Bom Moon}
\address{Department of Mathematics, Fordham University, New York, NY 10023}
\email{hmoon8@fordham.edu}
\author{Helen Wong}
\address{Department of Mathematical Sciences, Claremont McKenna College,
Claremont, CA 91711}
\email{hwong@cmc.edu}
\date{\today}
\begin{document}
\maketitle

\begin{abstract}
We consider two algebras of curves associated to an oriented surface of finite type -- the cluster algebra from combinatorial algebra, and the skein algebra from quantum topology. We focus on generalizations of cluster algebras and generalizations of skein algebras that include arcs whose endpoints are marked points on the boundary or in the interior of the surface. We show that the generalizations are closely related by maps that can be explicitly defined, and we explore the structural implications, including (non-)finite generation.  We also discuss open questions about the algebraic structure of the algebras.  
\end{abstract}

\section{Introduction}\label{sec:intro}

Let $\Sigma$ be an oriented surface with a finite set of marked points, which we distinguish as boundary marked points or interior punctures.  The main goals of this paper are to review and clarify relationships between several generalizations of the cluster algebra and the skein algebra of $\Sigma$,  and to explore the structural implications of this relationship.  

The \emph{cluster algebra} $\cA(\Sigma)$ of a surface are a large class of cluster algebras that are particularly well-studied (See \cite{Wil14} for an excellent survey). Cluster algebras are commutative algebras whose generators (cluster variables) and relations (cluster seeds) satisfy mutation properties determined by certain types of combinatorial data \cite{FZ02}. In the case of the cluster algebra of a surface with boundary marked points, the generators correspond to edges of an ideal triangulation, and mutation encodes the combinatorial changes seen when a diagonal edge of the triangulation is flipped \cite{GSV05, FG06}.  The definition was motivated by the similarity between the mutation formulas for cluster algebras and the Ptolemy relation for lengths of edges seen in Teichm\"uller theory, especially as in the work of Penner \cite{Pen87, Pen12}.   In the presence of interior marked points, one can also define the cluster algebra, but one needs to combinatorially extend the definition of a triangulation and the edge flip move \cite{FST08, FT18}.  

The cluster algebra for a surface has been found to be closely related to the Kauffman bracket \emph{skein algebra} $\cS_{q}(\Sigma)$  from quantum topology.  The skein algebra was introduced in \cite{Prz91, Tur91} as a generalization of the Jones polynomial \cite{Jon85, Kau87} and appears prominently in the associated topological quantum field theory \cite{BHMV95}. The skein algebra  is generated by framed loops  in $\Sigma \times (-1, 1)$ modulo the Kauffman bracket skein relations,  and it is non-commutative except for a few small surfaces and except when $q = \pm 1$.   The skein algebra holds a special place in quantum topology  due to its connections to hyperbolic geometry and algebraic geometry.  In particular, it  is a deformation quantization of the $\mathrm{SL}_2(\CC)$-character variety of the surface \cite{Bul97, PS00, Tur91}, which contains the Teichm\"uller space of $\Sigma$.

Because of the common connection to Teichm\"uller theory, it is natural to expect that there is some compatibility between the cluster algebra $\cA(\Sigma)$ and the skein algebra $\cS_{q}(\Sigma)$. Actually, because the cluster algebra is generated by edges of an ideal triangulation, a little thought shows that the compatibility should exist with versions of the skein algebra that include arcs between marked points on $\Sigma$.  One such generalization appeared in work of Roger and Yang \cite{RY14}, who wanted to identify their skein algebra as a deformation quantization of Penner's decorated Teichm\"uller space  for a punctured surface $\Sigma$ \cite{Pen87, Pen12}.  At around the same time, Muller \cite{Mul16} defined a different skein algebra, coming from a quantum cluster algebra for  a surface of boundary marked points.  Since then, these definitions have been combined and extended to include decorations of $\pm$ (called states) on the boundary marked points \cite{Le18, BKL24}.  We aim to make a precise statement about their relationship with each other and with cluster algebras here.

We remark that similar  compatibility results have been proved previously,  with different levels of generality and different methods \cite{Mul16, MQ23, MW24}.  However, clear statements about the compatibility of the cluster algebra and skein algebra have been missing so far.    The known compatibility results have treated only special cases (such as surfaces with only boundary marked points, or only with interior punctures) and only with certain variations. A goal of this paper is to provide general statements that incorporate what is known so far, both by providing references and by explaining how to extend existing proofs.  

We begin with definitions of the various generalizations of the skein algebra that include arcs in Section \ref{sec:defskein}. Our primary interest is in the \emph{ Muller-Roger-Yang generalized skein algebra} $\cS_{q}^{\MRY}(\Sigma)$ from \cite{BKL24}. We also discuss the \emph{ L\^e-Roger-Yang generalized skein algebra} $\cS_{q}^{\LRY}(\Sigma)$, which includes states at the boundary marked points.  In Section \ref{sec:finitegeneration}, we prove a striking common property, that the skein algebras are finitely generated. In contrast, as we discuss in Section \ref{ssec:applications}, the cluster algebra $\cA(\Sigma)$ is not always finitely generated. 

\begin{theoremletter}\label{thm:finitegeneration}
The generalized skein algebras $\cS_{q}^{\MRY}(\Sigma)$ and $\cS_{q}^{\LRY}(\Sigma)$ are finitely generated. 
\end{theoremletter}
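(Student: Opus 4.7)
The plan is to fix an ideal triangulation $\Delta$ of $\Sigma$ whose vertex set consists of all boundary marked points and interior punctures, and to exhibit an explicit finite generating set built from $\Delta$. For the MRY algebra $\cS_{q}^{\MRY}(\Sigma)$, the proposed generators are the arcs in $\Delta$, the boundary arcs of $\Sigma$, and the peripheral loops encircling each interior puncture (together with their inverses coming from the Roger--Yang relations at interior punctures). For the LRY algebra $\cS_{q}^{\LRY}(\Sigma)$, one further adjoins the finite collection of states on the endpoints of boundary arcs and of arcs of $\Delta$; since each boundary marked point carries only finitely many possible states $\{+,-\}$, this keeps the set finite.

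The main step is a straightening argument. Given any multicurve $D$ in general position with respect to $\Delta$, define a complexity by the geometric intersection number $|D\cap\Delta|$, refined lexicographically by the number of components of $D$. When $|D\cap\Delta|>0$, choose an innermost crossing of $D$ with some edge $e\in\Delta$ and apply the Kauffman bracket skein relation; each of the two resolutions is a multicurve with strictly smaller complexity, modulo powers of $q$. When $|D\cap\Delta|=0$, the curve $D$ lies in the union of the triangles of $\Delta$, and each such component is isotopic either to an arc of $\Delta$, a boundary arc, a peripheral loop around a puncture, or a contractible/puncture--bounding loop handled by the trivial--loop relations and the Roger--Yang puncture--loop relations. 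An induction on complexity then expresses any skein element as a polynomial in the chosen generators.

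The hard part is handling the interior punctures. Unlike the Muller case of boundary marked points only, where the straightening procedure is essentially that of \cite{Mul16} and terminates cleanly, the skein relations near a puncture can create nugatory loops whose removal requires the inversion relations for puncture loops from \cite{RY14}. One must verify that the straightening remains well-defined, and that the result of a reduction step at a given crossing does not depend on the choice of auxiliary innermost disk up to the relations in the algebra; this is essentially a confluence check. Once this is in place, the resulting finite list of generators works uniformly.

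To finish, one assembles the argument by invoking the existing partial results. The MRY case without interior punctures is \cite{Mul16}; the case without boundary marked points is contained in \cite{RY14}; and the combined statement was established in \cite{BKL24}. For the LRY algebra, one reduces each stated arc to a linear combination of stated triangulation edges by using the state--sum relations along arcs of $\Delta$, which preserves finiteness because there are only finitely many stated versions of each generator of the MRY subalgebra. The only novel input needed beyond the cited sources is to check that the two straightening procedures, for MRY arcs and for LRY states, commute to the extent required for the combined reduction to terminate.
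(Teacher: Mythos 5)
Your inductive step does not go through. An intersection point of your multicurve $D$ with an edge $e$ of the triangulation $\Delta$ is not a crossing in the tangle diagram of $D$ --- the edge $e$ is a reference arc, not part of the skein --- so there is no Kauffman bracket relation to ``apply'' at it. The only way to exploit such an intersection is the standard one: multiply by $e$ and resolve the genuine crossings of $e\cdot D$, which does produce terms of smaller intersection number but expresses $e\cdot D$, not $D$, in terms of simpler curves; recovering $D$ then requires inverting $e$. Hence your argument proves finite generation of the localization $\cS_{q}^{\MRY}(\Sigma)[\Delta^{-1}]$ (essentially Muller's embedding into a quantum torus \cite{Mul16}), not of $\cS_{q}^{\MRY}(\Sigma)$ itself. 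Correspondingly, your proposed generating set (edges of one triangulation, boundary arcs, peripheral loops) is not shown to generate: a non-peripheral essential simple closed curve meets every triangulation, your induction never reaches its base case for such a curve, and you give no other reason it should lie in the subalgebra generated by the arcs of $\Delta$. This is exactly where loop generators become unavoidable; for a closed surface there are no arcs at all and one needs Bullock's loop generators \cite{Bul99}.

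The paper's proof is organized differently precisely to avoid this. It isotopes all interior punctures onto the boundary of a small disk, uses the known finite generating set of the skein algebra of a punctured disk \cite{ACDHM21} together with a puncture-skein reduction (Lemma \ref{lem:pushout}) to push everything else off the disk, and then runs Bullock's handle-traversal algorithm on the complement, whose marked points now all lie on its boundary. The resulting generating set consists of the arcs $\beta_{ij}$ inside the disk together with all simple loops and arcs traversing each handle at most once --- a strictly larger set than yours --- and no localization is needed. To salvage a triangulation-based approach you would have to either add loop generators and justify them by a Bullock-type argument, or work in the localized algebra and explain separately how to descend to $\cS_{q}^{\MRY}(\Sigma)$. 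Your closing appeal to \cite{BKL24} does not fill the gap either: the quantum-trace argument there concerns the reduced/localized algebras, and the paper's stated point is to give a skein-theoretic proof covering the general unlocalized case.
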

It was shown that the original skein algebra $\cS_{q}(\Sigma)$ was finitely generated in  \cite{Bul99}, and the proof was extended in for specific cases \cite{Mul16, BPKW16, MW24}. Our proof here applies to the most general case.  It immediately follows that some variations of generalized skein algebras are also finitely generated (Corollary \ref{cor:finitegenerationLRY} and \ref{cor:finitegeneration}). 

In Section \ref{sec:clusteralg}, we define the cluster algebra for surfaces with both boundary marked points and interior punctures, as well its associated \emph{upper cluster algebra} $\cU(\Sigma)$.   We prove the compatibility of the cluster algebras and skein algebras in Section \ref{sec:compatibility}.  To do so, we make two modifications.  First, observe that the set of boundary edges form a multiplicative subset, and so we may define a localization $\cS_{q}^{\MRY}(\Sigma)[\partial^{-1}]$. Then we set $q = 1$ to obtain a commutative skein algebra $\cS_{1}^{\MRY}(\Sigma)[\partial^{-1}]$, and  we show that it contains a copy of the cluster algebra.  Secondly, we define a certain subalgebra $\cS_{q}^{\square}(\Sigma)$ of $\cS_{q}^{\MRY}(\Sigma)[\partial^{-1}]$ that is generated by tagged arcs and loops; see Section \ref{sec:compatibility} for the definition.   We show that it is contained inside the upper cluster algebra $\cU(\Sigma)$. 

\begin{theoremletter}\label{thm:compatibility}
Let $\Sigma$ be a triangulable marked surface. 
\begin{enumerate}
\item There is an inclusion $\cA(\Sigma) \subset \cS_{1}^{\MRY}(\Sigma)[\partial^{-1}]$. 
\item  There is a subalgebra $\cS_{q}^{\square}(\Sigma)$ of $\cS_{q}^{\MRY}(\Sigma)[\partial^{-1}]$  (that is generated by  tagged arcs and loops) so that there are inclusions
\[
	\cA(\Sigma) \subset \cS_{1}^{\square}(\Sigma) \subset \cU(\Sigma).
\]
\end{enumerate}
\end{theoremletter}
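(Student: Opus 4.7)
The plan is to fix a tagged triangulation $T$ of $\Sigma$ and use the fact that, at $q=1$, the skein crossing resolution in $\cS_{1}^{\MRY}(\Sigma)$ is precisely the commutative Ptolemy identity governing cluster mutation. Concretely, if two tagged arcs $e, e'$ are the two diagonals of a triangle quadrilateral with boundary arcs $a, b, c, d$, the Kauffman resolution at $q=1$ yields $e e' = a c + b d$, which is exactly the exchange relation for the mutation at $e$ in $\cA(\Sigma)$. This reduces each inclusion to a verification that the defining relations of $\cA(\Sigma)$ lift to identities in the skein algebra.

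For part (1), I would define a ring homomorphism from the polynomial ring on the initial cluster to $\cS_{1}^{\MRY}(\Sigma)[\partial^{-1}]$ by sending each edge of $T$ to the tagged arc it represents. The localization at boundary edges is forced because boundary edges are frozen variables that appear in denominators of exchange relations involving a triangle with a boundary side. By the observation above, the map respects every mutation, so by induction over mutation sequences it extends to a ring homomorphism $\cA(\Sigma) \to \cS_{1}^{\MRY}(\Sigma)[\partial^{-1}]$. Injectivity can be verified by comparing the $\ZZ$-span of cluster monomials in $\cA(\Sigma)$ with the span of the corresponding simple tagged multicurves in the skein algebra, which are linearly independent.

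For part (2), let $\cS_{q}^{\square}(\Sigma)$ denote the subalgebra of $\cS_{q}^{\MRY}(\Sigma)[\partial^{-1}]$ generated by tagged arcs and loops together with the inverses of boundary edges. The inclusion $\cA(\Sigma) \subset \cS_{1}^{\square}(\Sigma)$ is immediate from part (1), since every cluster variable is a tagged arc or a boundary edge. For $\cS_{1}^{\square}(\Sigma) \subset \cU(\Sigma)$, I would invoke the Laurent phenomenon, which characterizes $\cU(\Sigma)$ as the intersection of all cluster Laurent polynomial rings. It then suffices to show that each generator of $\cS_{1}^{\square}(\Sigma)$ is a Laurent polynomial in the edges of every tagged triangulation $T'$. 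For tagged arcs this follows from iterated Ptolemy expansion in $T'$. For a simple loop $\gamma$, one isotopes $\gamma$ into general position with $T'$, applies skein resolutions at each crossing of $\gamma$ with an edge of $T'$, and collects the resulting sum of tagged multicurves supported on $T'$, each scaled by a monomial in the edge variables of $T'$.

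The principal obstacle is the Laurent expansion of loops: since loops are not cluster variables, their membership in $\cU(\Sigma)$ must be proved directly by skein computation, carefully tracking how iterated resolutions accumulate denominators and confirming that no non-monomial factors arise. Two secondary complications are the treatment of tagged arcs incident to interior punctures, where the two tag choices at each puncture must be reconciled with the tagged triangulation formalism of Fomin-Shapiro-Thurston, and the handling of loops encircling punctures, which interact with the puncture skein identities of $\cS_{q}^{\MRY}(\Sigma)$ in a subtle way.
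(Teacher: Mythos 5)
Your outline is essentially Muller's argument for unpunctured surfaces, and for that case it is fine: at $q=1$ the crossing resolution is the Ptolemy relation, boundary edges must be inverted, and loops are handled by resolving against a triangulation. But there is a genuine gap exactly where the theorem's generality lies, namely at interior punctures. You write that the map sends each edge of $T$ to ``the tagged arc it represents,'' but tagged arcs are not elements of $\cS_{q}^{\MRY}(\Sigma)$ --- the skein algebra contains only untagged arcs ending at punctures, together with the vertex classes $v_i$. The paper's key device, absent from your proposal, is the dictionary ``a notched tag is a vertex class'': a tagged arc $x$ notched at a puncture $v$ is sent to $v\,\underline{x}$. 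This is not a cosmetic choice. A tagged flip at a puncture is governed in the skein algebra by the puncture--skein relation (D), which at $q=1$ reads $v\,\underline{x}\,\underline{x'}=\underline{x_2}+\underline{x_3}$, while the cluster exchange relation is $xx'=x_2+x_3$; the two match only because the factor $v$ is absorbed into the image of the notched arc $x'$. Without this identification your map is undefined on notched arcs and the exchange relations at punctures fail by a vertex-class factor, so the induction over mutation sequences breaks precisely at tagged flips and self-folded triangles. You flag this as a ``secondary complication,'' but it is the central issue (and is why the paper defines $\cS_{q}^{\square}(\Sigma)$ via products $v\beta$, $w\beta$, $vw\beta$ rather than via ``tagged arcs'').

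Two smaller points. Your injectivity argument --- comparing spans of cluster monomials with simple tagged multicurves --- is shaky, since cluster monomials do not form a basis of $\cA(\Sigma)$ and you would need to control a full linear basis on both sides. The paper instead fixes one ordinary triangulation $\Delta$ and factors both algebras into the Laurent ring $\CC[x_1^{\pm1},\dots,x_n^{\pm1}]$, so that $\iota=\iota'\circ\rho$ injective forces $\rho$ injective; this sidesteps basis questions entirely. Finally, for $\cS_{1}^{\square}(\Sigma)\subset\cU(\Sigma)$ your plan to expand loops against every tagged triangulation by direct skein resolution is workable but already in the literature (the paper cites \cite[Proposition 3.15]{MSW11}); the subtlety worth noting is that a bare vertex class $v$ is Laurent with respect to ordinary triangulations but \emph{not} with respect to tagged ones, which is why $\cU(\Sigma)$ contains the images of tagged arcs and loops but not the vertex classes themselves.
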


We remark that the second part of Theorem \ref{thm:compatibility} also appears in \cite[Proposition 3.7]{MQ23}. However, at least to the authors, the extension from an unpunctured surface case \cite{Mul16} to punctured surfaces needs some verification.  This is because  there are no tagged arcs in $\cS_{q}^{\MRY}(\Sigma)$, but there are vertex classes. We give some brief outline of the argument and references to relevant literature in Section \ref{sec:compatibility}.

Due to limited scope and time, the authors decided to restrict discussion here to the generalized skein algebras $\cS_{q}^{\MRY}(\Sigma)$ and $\cS_{q}^{\LRY}(\Sigma)$, and their variations.  The research on the ordinary  skein algebra $\cS_{q}(\Sigma)$  is both long and rich.  The results for the $\cS_{q}(\Sigma)$ obviously inspired the theorems presented in this paper, as well as the open questions from Section \ref{sec:open}.   Unfortunately, a full discussion and bibliography are outside the scope of our paper.   We encourage interested readers to check the references, e.g. \cite{LY21, PBIMW24}.  

\subsection*{Acknowledgements}
We thank the organizers of the special sessions on Knots, Skein Modules, and Categorification at the Joint Mathematical Meetings and Skein Modules in Low Dimensional Topology at a AMS sectional meeting in 2024 for their invitation. 
HK was supported by JSPS KAKENHI Grant Number JP23K12976. 
HW was partially supported by DMS-2305414 from the US National Science Foundation.

\section{Skein algebras}\label{sec:defskein}

Let $\underline{\Sigma}$ be a compact oriented surface with possibly nonempty boundary $\partial \underline{\Sigma}$, and $V$ be a finite set of points of $\underline{\Sigma}$. The pair $\Sigma := (\underline{\Sigma}, V)$ is called a \emph{marked surface}. From now on, if there is no chance of confusion, a surface is always a marked surface. We set $V_{\partial} := V \cap \partial \underline{\Sigma}$ and call it by the set of \emph{boundary marked points}. The complement $V_{\circ} := V \setminus V_{\partial}$ is called the set of \emph{interior punctures}.  Depending on how to treat the arc classes meeting $\partial \underline{\Sigma}$, we use two different types of \emph{tangles}. 

\begin{definition}\label{def:tangles}
A one-dimensional compact submanifold $\alpha$ of $\Sigma \times (-1, 1)$ equipped with a framing is called a \emph{$V$-tangle} if 
\begin{enumerate}
\item $\partial \alpha \subset V \times (-1, 1)$;
\item $\mathrm{Int}\; \alpha \subset (\mathrm{Int}\; \Sigma \setminus V) \times (-1, 1)$. 
\end{enumerate}
A \emph{$\partial$-tangle} is a one-dimensional compact submanifold $\alpha \subset \Sigma \times (-1, 1)$ with a framing such that 
\begin{enumerate}
\item $\partial \alpha \subset (\partial \underline{\Sigma} \setminus V_{\partial}) \times (-1, 1)$;
\item for each boundary component $e$ of $\partial \underline{\Sigma} \setminus V_{\partial}$, all points of $\partial \alpha \cap e \times (-1, 1)$ have different heights;
\item $\mathrm{Int}\; \alpha \subset (\mathrm{Int}\; \Sigma \setminus V) \times (-1, 1)$. 
\end{enumerate}
\end{definition}

If $\alpha$ is a manifold without boundary, both a $V$-tangle and a $\partial$-tangle are disjoint unions of framed loops on $\mathrm{Int} \; \Sigma \setminus V$. 

Here, we define two generalized skein algebras. For each interior puncture $v_{i} \in V_{\circ}$, we assign a formal variable $v_{i}$. We fix the Laurent polynomial ring $\CC[v_{i}^{\pm1}]$ as the coefficient ring. It commutes with all other elements that we will introduce from now on. We also fix $q \in \CC^{*}$.

\begin{definition}[Muller-Roger-Yang skein algebra 
]\label{def:MRY} 
The \emph{Muller-Roger-Yang skein algebra} $\cS_{q}^{\MRY}(\Sigma)$ of a marked surface $\Sigma = (\underline{\Sigma}, V)$ is a $\CC[v_{i}^{\pm1}]$-algebra generated by isotopy classes of $V$-tangles in $\Sigma \times (-1, 1)$, modulo the following generalized skein relations:

\begin{align*}
&({\rm A})\begin{array}{c}\includegraphics[scale=0.15]{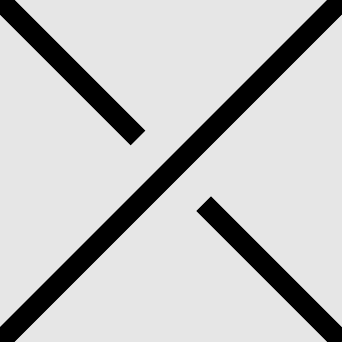}\end{array}
=q\begin{array}{c}\includegraphics[scale=0.15]{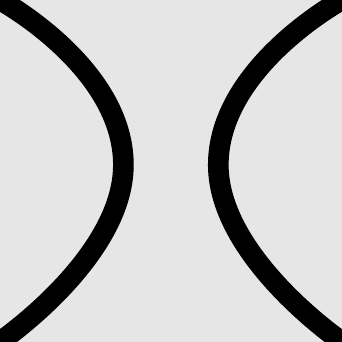}\end{array}
+q^{-1}\begin{array}{c}\includegraphics[scale=0.15]{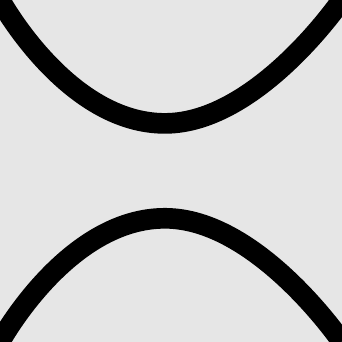}\end{array},
\\
&({\rm B})\begin{array}{c}\includegraphics[scale=0.15]{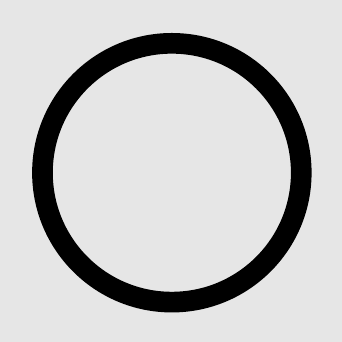}\end{array}
=(-q^2-q^{-2})\begin{array}{c}\includegraphics[scale=0.15]{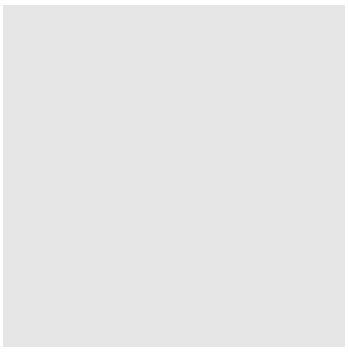}\end{array}, \\
&({\rm C}) \begin{array}{c}\includegraphics[scale=0.18]{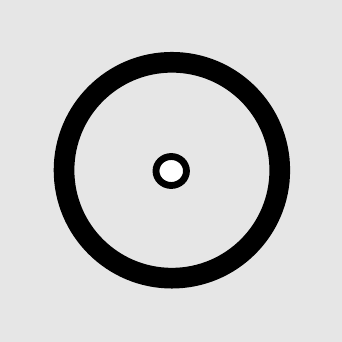}\end{array}=(q+q^{-1})\begin{array}{c}\includegraphics[scale=0.18]{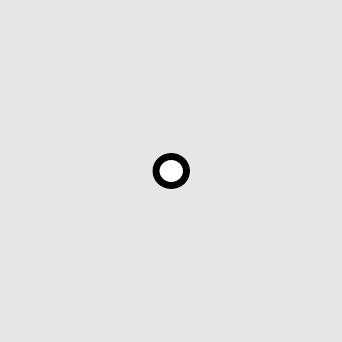}\end{array},\\
&({\rm D}) \begin{array}{c}\includegraphics[scale=0.18]{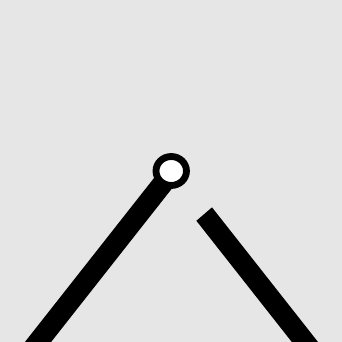}\end{array}=v^{-1}\Big(q^{1/2}\begin{array}{c}\includegraphics[scale=0.18]{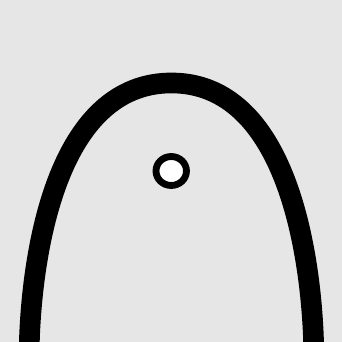}\end{array}+q^{-1/2}\begin{array}{c}\includegraphics[scale=0.18]{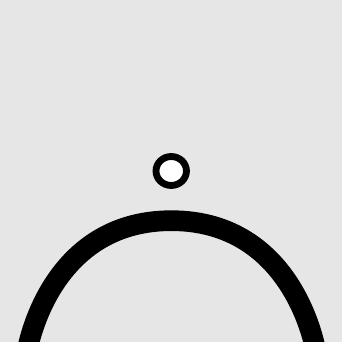}\end{array}\Big)\quad
\text{around an interior puncture $v$,}\\
&({\rm E})\ q^{-1/2}\begin{array}{c}\includegraphics[scale=0.15]{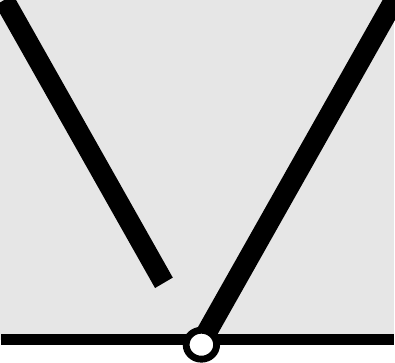}\end{array}
=q^{1/2}\begin{array}{c}\includegraphics[scale=0.15]{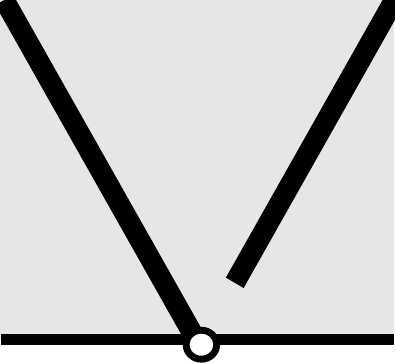}\end{array},
\\
&({\rm F})\ \begin{array}{c}\includegraphics[scale=0.15]{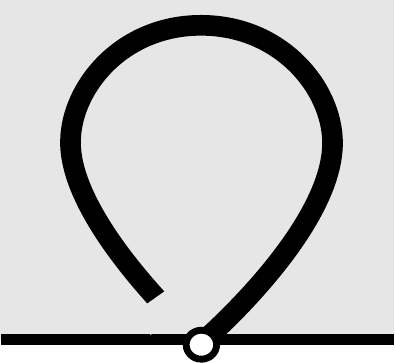}\end{array}=0=\begin{array}{c}\includegraphics[scale=0.15]{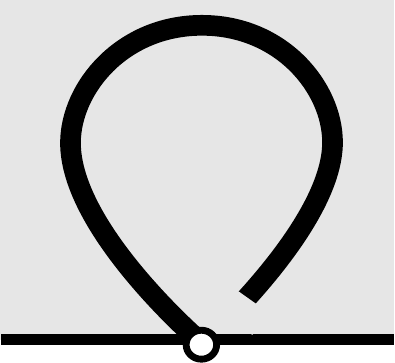}\end{array}.
\end{align*}

The multiplication is defined as the stacking of $V$-tangles. 
\end{definition}

The second generalized skein algebra is based on $\partial$-tangles and extra \emph{states}. A state of a $\partial$-tangle $\alpha$ is a map $s : \partial \alpha \cap \partial \underline{\Sigma} \to \{+, -\}$. A stated $\partial$-tangle is a pair $(\alpha, s)$. If there is no confusion, we denote it by $\alpha$. 

\begin{definition}[L\^e-Roger-Yang skein algebra ]\label{def:LRY}
The \emph{L\^e-Roger-Yang skein algebra} $\cS_{q}^{\LRY}(\Sigma)$ of a marked surface $\Sigma$ is a $\CC[v_{i}^{\pm1}]$-algebra generated by isotopy classes of stated $\partial$-tangles in $\Sigma \times (-1, 1)$, modulo the relations (A)--(D) and the following extra skein relations:

\begin{align*}
&({\rm E}') \begin{array}{c}\includegraphics[scale=0.15]{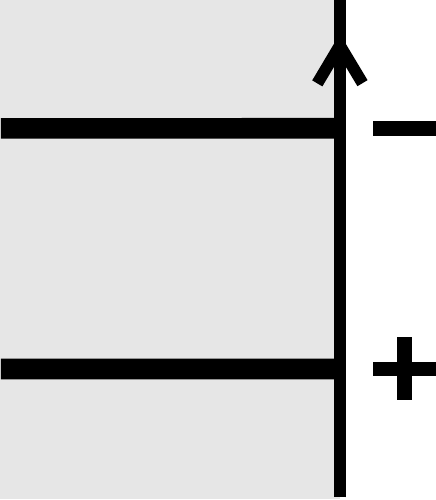}\end{array}=q^2\begin{array}{c}\includegraphics[scale=0.15]{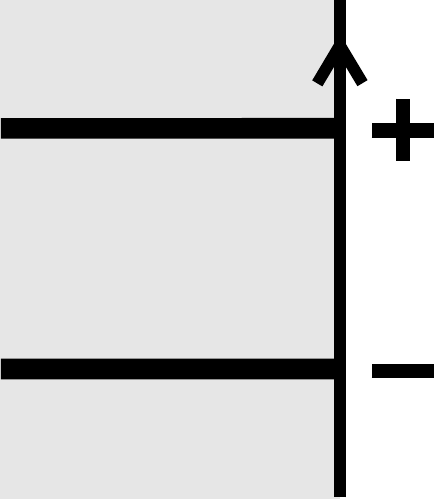}\end{array}+q^{-1/2}\begin{array}{c}\includegraphics[scale=0.15]{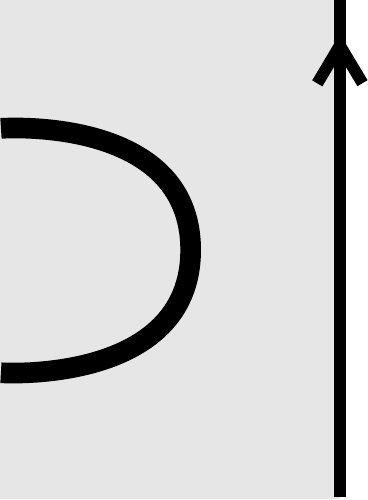}\end{array},\\
&({\rm F}')\ \begin{array}{c}\includegraphics[scale=0.15]{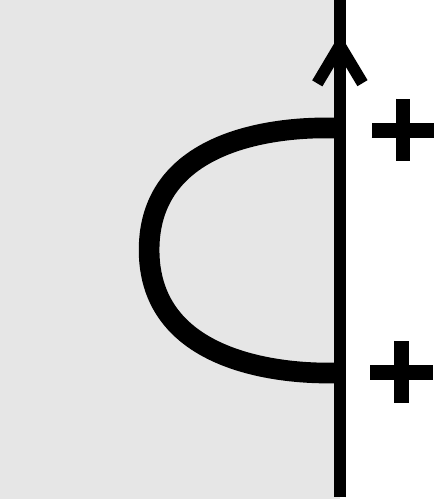}\end{array}=\begin{array}{c}\includegraphics[scale=0.15]{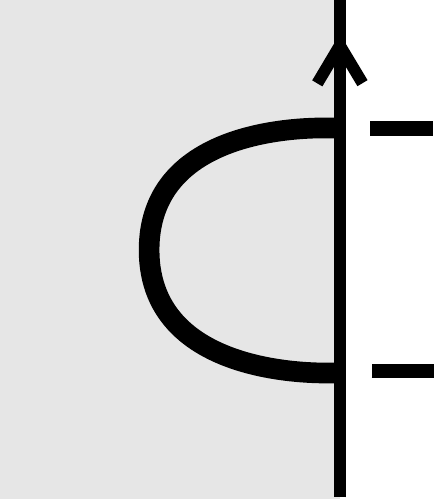}\end{array}=0 \qquad \mbox{ and } 
\qquad \begin{array}{c}\includegraphics[scale=0.15]{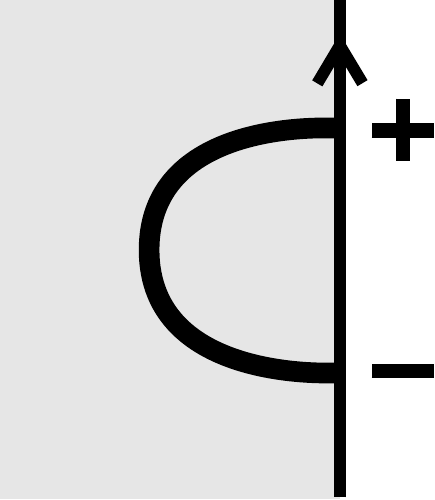}\end{array}=q^{-1/2}\begin{array}{c}\includegraphics[scale=0.15]{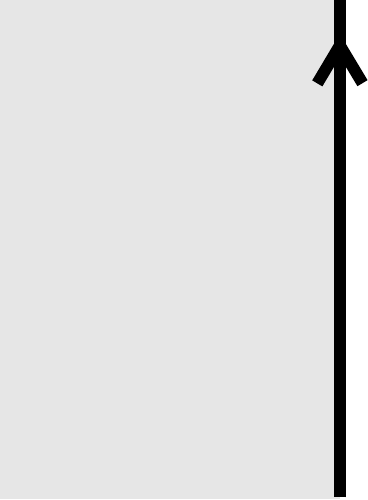}\end{array}
\end{align*}

The multiplication is defined as the stacking of $\partial$-tangles. 
\end{definition}

\begin{remark}
The ordinary Kauffman bracket skein algebra $\cS_{q}(\Sigma)$ is defined as the algebra generated by isotopy classes of loops in $\Sigma \times (-1, 1)$ modulo skein relations (A) and (B) \cite{Prz91, Tur91}. 
The definitions of $\cS_q^\MRY(\Sigma)$ and $\cS_q^\LRY(\Sigma)$ follow \cite{BKL24}, which combined generalizations of the skein algebra from recent papers including \cite{RY14, Mul16, Le18, CL22}.
If $\Sigma$ is a surface without any boundary, both $\cS_q^\MRY(\Sigma)$ and $\cS_q^\LRY(\Sigma)$ are specialized to the Roger-Yang generalized skein algebra $\cS_q^\mathrm{RY}(\Sigma)$ \cite{RY14}. If $\Sigma$ does not have any interior puncture,  $\cS_q^\MRY(\Sigma)$ is Muller's skein algebra in \cite{Mul16}, and $\cS_q^\LRY(\Sigma)$ is the stated skein algebra in \cite{Le18}. 
\end{remark}

We will also consider several variations of $\cS_q^\MRY(\Sigma)$ and $\cS_q^\LRY(\Sigma)$ that have appeared in the literature.  First, there are the subalgebras generated by tangles that avoid interior punctures.  Let $\cS_{q}^{\rM}(\Sigma)$ be the subalgebra of $\cS_{q}^{\MRY}(\Sigma)$ generated by all $V$-tangles that do not intersect any interior punctures. Similarly, $\cS_{q}^{\rL}(\Sigma)$ is the subalgebra of $\cS_{q}^{\LRY}(\Sigma)$ generated by all $\partial$-tangles not meeting interior punctures.  Note that relation (C) holds in $\cS_{q}^{\rM}(\Sigma)$ and $\cS_{q}^{\rL}(\Sigma)$. 

The next variations drop relation (C).  Let $\cS_{q}^{\rM+}(\Sigma)$ (resp. $\cS_{q}^{\rL+}(\Sigma)$) be a $\CC$-algebra generated by all $V$-tangles ($\partial$-tangles) that do not meet any interior punctures modulo the relataions (A), (B), (D), (E), and (F) (resp. (A), (B), (D), (E'), and (F')). Then we have morphisms
\begin{equation}
	\cS_{q}^{\rM+}(\Sigma) \stackrel{\pi}{\to} \cS_{q}^{\rM}(\Sigma) \stackrel{i}{\to} \cS_{q}^{\MRY}(\Sigma)
\end{equation}
and
\begin{equation}
	\cS_{q}^{\rL+}(\Sigma) \stackrel{\pi}{\to} \cS_{q}^{\rL}(\Sigma) \stackrel{i}{\to} \cS_{q}^{\LRY}(\Sigma).
\end{equation}
The first map $\pi$ is an epimorphism and $i$ is a monomorphism. 

The last variation we consider involves a quotient of the stated skein algebras.  For each boundary marked point $p$, a $\partial$-tangle in Figure \ref{fig:badarc} around $p$ is called a \emph{bad arc}. 

\begin{figure}
\includegraphics[width=0.11\textwidth]{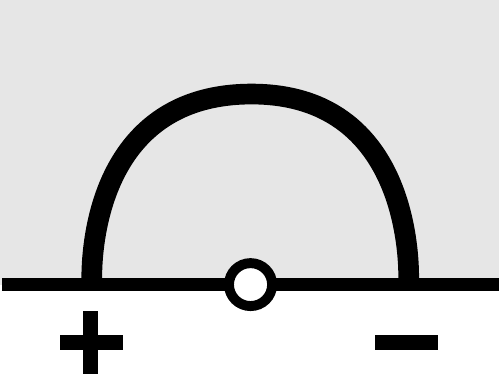}
\caption{A bad arc}
\label{fig:badarc}
\end{figure} 

\begin{definition}[\protect{\cite{CL22}}]\label{def:reducedLRY}
The \emph{reduced LRY generalized skein algebra} $\overline{\cS}_{q}^{\LRY}(\Sigma)$ is the quotient of $\cS_{q}^{\LRY}(\Sigma)$ by the two-sided ideal generated by bad arc classes. Let $p : \cS_{q}^{\LRY}(\Sigma) \to \overline{\cS}_{q}^{\LRY}(\Sigma)$ be the quotient map. 
\end{definition}

We may define similar quotients $\overline{\cS}_{q}^{\rL}(\Sigma)$ (resp. $\overline{\cS}_{q}^{\rL+}(\Sigma)$) of $\cS_{q}^{\rL}(\Sigma)$ (resp. $\cS_{q}^{\rL+}(\Sigma)$) by the bad arc classes. We will retain the same notation $p$ for the quotient map.

\section{Compatibility between skein algebras}\label{sec:compatibility}

In Section \ref{sec:defskein}, we introduced many variations of the skein algebra. From the algebraic perspective, they are not altogether `different' skein algebras, and in this section, we  will give a precise relationship between them.  Our goal will be to justify the following commutative diagram.  

\begin{equation}\label{eqn:skeinalgebradiagram}
\xymatrix{\cS_{q}^{\rM+}(\Sigma) \ar^{\pi}[r] \ar^{m}[d] &\cS_{q}^{\rM}(\Sigma) \ar^{m}[d] \ar^{i}[r] & \cS_{q}^{\MRY}(\Sigma) \ar^{m}[d]\\
\cS_{q}^{\rL+}(\Sigma) \ar^{\pi}[r] \ar^{p}[d] &\cS_{q}^{\rL}(\Sigma) \ar^{i}[r] \ar^{p}[d] & \cS_{q}^{\LRY}(\Sigma) \ar^{p}[d]\\
\overline{\cS}_{q}^{\rL+}(\Sigma) \ar^{\pi}[r] \ar@{=}^{r}[d]& \overline{\cS}_{q}^{\rL}(\Sigma) \ar^{i}[r] \ar@{=}^{r}[d] & \overline{\cS}_{q}^{\LRY}(\Sigma) \ar@{=}^{r}[d]\\
\cS_{q}^{\rM+}(\Sigma)[\partial^{-1}] \ar[r]&\cS_{q}^{\rM}(\Sigma)[\partial^{-1}] \ar[r]&\cS_{q}^{\MRY}(\Sigma)[\partial^{-1}]}
\end{equation}

From their definitions, we already have the maps $\pi$, $p$, and $i$ in the diagram. The morphisms $\pi$ and $p$ are epimorphisms, and $i$ is a monomorphism. We now define the morphisms $m$ and $r$. 

The homomorphism 
\begin{equation}\label{eqn:movinghom}
	m : \cS_{q}^{\MRY}(\Sigma) \to \cS_{q}^{\LRY}(\Sigma),
\end{equation}
comes from the `moving trick.' Recall that $\cS_{q}^{\MRY}(\Sigma)$ is generated by $V$ -tangles, which are one-dimensional submanifolds embedded in $\Sigma \times (-1, 1)$. So for each $v \in V$, the heights at the ends are all distinct. For each $v \in V_{\partial}$, we may draw a local diagram around $v$ so that all arcs ending at $v$ are not tangent to each other, as the diagram on the left in Figure \ref{fig:movingtrick}. Then, by sliding the ends to the right component of $v$, we obtain a $\partial$-tangle. For each end $\alpha \cap \partial \underline{\Sigma} \times (-1, 1)$, we assign a positive state. Using the skein relations in Definition \ref{def:MRY}, one may check that this is a well-defined map and indeed an algebra homomorphism. 

\begin{figure}[!ht]
$\begin{array}{c}\includegraphics[scale=0.18]{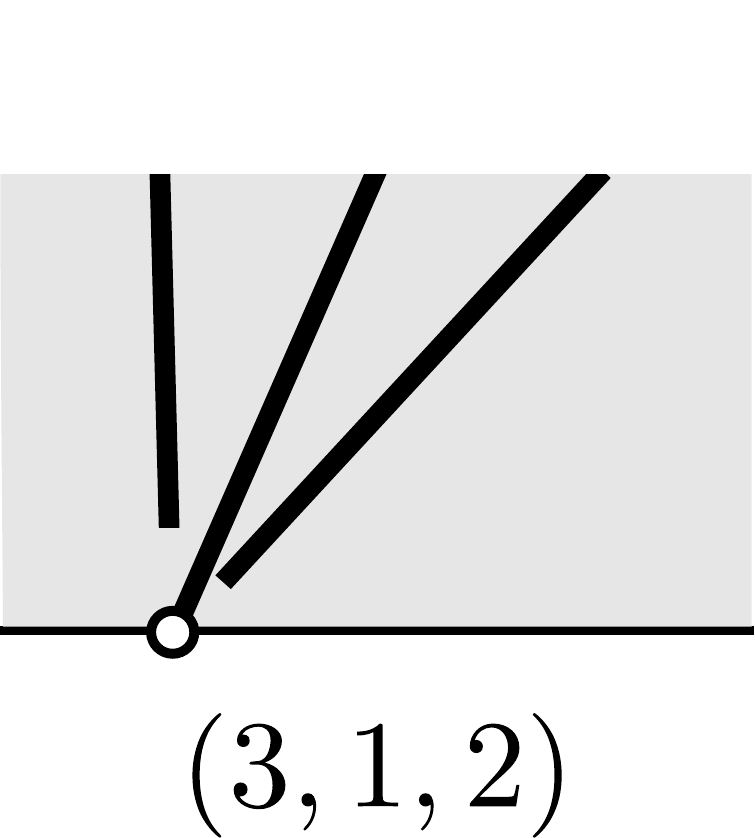}\end{array}\longleftrightarrow\begin{array}{c}\includegraphics[scale=0.18]{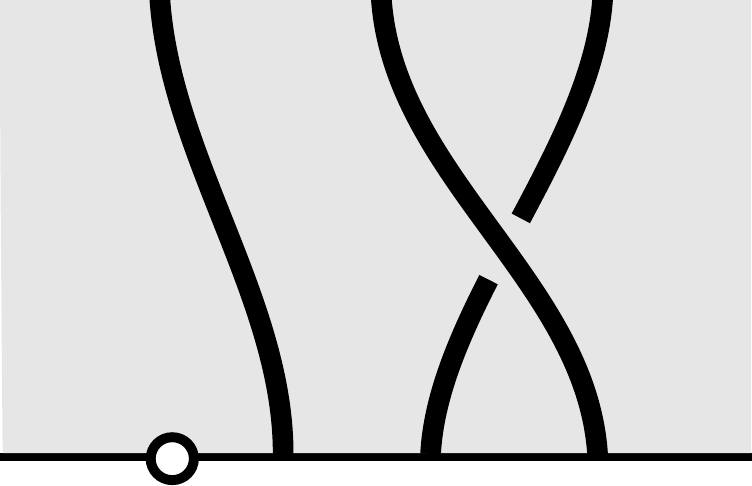}\end{array}$
\label{fig:movingtrick}
\caption{Moving trick. The numbers under the left diagram describes relative height of three ends at the boundary marked points.}
\end{figure}

The map $m$ is an identity around the interior punctures. Thus, by the same argument, we have similar homomorphisms 
\begin{equation}
	\cS_{q}^{\rM+}(\Sigma) \to \cS_{q}^{\rL+}(\Sigma), \quad 
	\cS_{q}^{\rM}(\Sigma) \to \cS_{q}^{\rL}(\Sigma). 
\end{equation}
We will use the same letter $m$ to denote the moving trick homomorphism, to avoid introducing too many notations. Because of the existence of negative states, unless $\partial \underline \Sigma = \emptyset$, $m$ is not surjective. 

We next define the map $r$.  Let $\partial$ be the multiplicative subset of $\cS_{q}^{\MRY}(\Sigma)$ generated by \emph{boundary arc classes}, which are connected $V$-tangles (hence a single arc) joining two boundary marked points on the same connected component $D$ of $\partial \Sigma$ and isotopic to the boundary edge connecting two adjacent marked points on $D$. Since any boundary arc class is $q$-commutative with any $V$-tangles, $\partial$ satisfies the Ore condition \cite[Section 2.1]{MR01}, hence we may define the Ore localization $\cS_{q}^{\MRY}(\Sigma)[\partial^{-1}]$.

\begin{proposition}\label{prop:MRYvsLRY}
For any surface $\Sigma$, there is an isomorphism 
\begin{equation}
	r : \cS_{q}^{\MRY}(\Sigma)[\partial^{-1}] \to \overline{\cS}_{q}^{\LRY}(\Sigma)
\end{equation}
from the localized MRY skein algebra to the reduced LRY skein algebra. 
\end{proposition}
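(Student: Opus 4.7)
The plan is to realize $r$ as the composition
\[
  \cS_{q}^{\MRY}(\Sigma) \xrightarrow{\,m\,} \cS_{q}^{\LRY}(\Sigma) \xrightarrow{\,p\,} \overline{\cS}_{q}^{\LRY}(\Sigma),
\]
to show that $(p\circ m)(\beta)$ is a unit for every boundary arc $\beta \in \partial$ so that the composition extends uniquely to the Ore localization via its universal property, and then to exhibit an explicit two-sided inverse obtained by ``unmoving'' the endpoints of stated $\partial$-tangles back to the marked points at the cost of inverses of boundary arcs whenever a state is negative.

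First I would verify invertibility of $(p\circ m)(\beta)$ for a boundary arc $\beta$. The image $m(\beta)$ is a $\partial$-tangle with both endpoints displaced to one side of the original marked points, equipped with $(+,+)$ states. An explicit inverse can be written as a linear combination of $(+,-)$ and $(-,+)$ stated short arcs parallel to the boundary edge, using relation $(F')$ which asserts that a suitably stated short arc is a scalar multiple of the empty tangle; the vanishing of bad arcs in $\overline{\cS}_q^\LRY(\Sigma)$ is precisely what makes this computation close. For the case $V_\circ = \emptyset$, this unit-check and its conclusion are essentially contained in \cite{CL22} and \cite{Le18}. Once it is established, the universal property of Ore localization yields a unique homomorphism $r: \cS_{q}^{\MRY}(\Sigma)[\partial^{-1}] \to \overline{\cS}_{q}^{\LRY}(\Sigma)$.

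Next I would construct a candidate inverse $r': \overline{\cS}_{q}^{\LRY}(\Sigma) \to \cS_{q}^{\MRY}(\Sigma)[\partial^{-1}]$. Given a stated $\partial$-tangle $(\alpha, s)$, slide each boundary endpoint back to its nearest marked point $v \in V_\partial$: a $(+)$-state endpoint is slid directly, yielding an end at $v$, while a $(-)$-state endpoint is slid to $v$ from the opposite side and interpreted as the $(+)$-version multiplied by the formal inverse (up to a scalar) of the adjacent boundary arc. Because relations $(A)$--$(D)$ and the formal variables $v_i$ around interior punctures are identical in both algebras and are not touched by the moving trick, this construction behaves transparently with respect to interior punctures.

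The main obstacle is showing that $r'$ is well-defined, i.e.\ that each defining relation of $\overline{\cS}_{q}^{\LRY}(\Sigma)$ becomes an identity in $\cS_{q}^{\MRY}(\Sigma)[\partial^{-1}]$ after applying $r'$. Relation $(F')$ for short arcs parallel to the boundary translates, after clearing inverses of boundary arcs, to relation $(F)$ in MRY. The bad arc relation matches relation $(F)$ as well, since the unmoving of a bad arc wraps around a marked point to produce a boundary-parallel $V$-tangle that is killed by $(F)$ after localization. Relation $(E')$, the stated crossing relation, is the most delicate: I would reduce it to relation $(E)$ by tracking how the sliding of endpoints interacts with the crossing and with the $q$-commutation of boundary arcs with $V$-tangles. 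Once $r'$ is shown well-defined, the checks $r \circ r' = \mathrm{id}$ and $r' \circ r = \mathrm{id}$ reduce to verifying on generators that sliding endpoints to marked points and then back (with the appropriate boundary-arc factors) returns the original class. With the interior-puncture relations acting as scalars in this argument, the $V_\circ \neq \emptyset$ case follows from the $V_\circ = \emptyset$ case of \cite{CL22} without new conceptual difficulty.
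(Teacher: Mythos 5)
Your first half coincides with the paper's: both define $r=p\circ m$, check that the image of a boundary arc class is a unit in $\overline{\cS}_{q}^{\LRY}(\Sigma)$ (the paper quotes \cite[Proposition 7.4]{CL22}, which identifies the inverse as the same curve with negative states), and invoke the universal property of the Ore localization. Where you diverge is in proving bijectivity. The paper does this in two short steps: injectivity follows by comparing the explicit $\CC[v_i^{\pm1}]$-module bases of $\cS_q^{\MRY}(\Sigma)$ and $\overline{\cS}_q^{\LRY}(\Sigma)$ given in \cite{BKL24} (the basis of the former maps into the basis of the latter), and surjectivity follows from a one-line skein computation: deforming a negative-state end near a marked point and applying $(\mathrm{E}')$ writes it as $q^{1/2}$ times a diagram in $\mathrm{im}\,r$ plus a term containing a bad arc, which dies in the reduced algebra. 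You instead propose to build a two-sided inverse $r'$ by ``unmoving'' stated endpoints, with negative states absorbed into inverses of boundary arcs.

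The gap is that the well-definedness of $r'$ is the entire mathematical content of your bijectivity claim, and you do not establish it. The unmoving procedure involves choices (which marked point an endpoint is slid to, the relative heights at a boundary edge), and $r'$ must be checked against isotopy of stated $\partial$-tangles, the height-reordering consequences of $(\mathrm{E}')$, the relations $(\mathrm{F}')$, and the bad-arc relation --- each with the correct powers of $q^{1/2}$. You explicitly flag the $(\mathrm{E}')$ check as ``the most delicate'' and then defer it; as written this is a plan, not a proof, and without it you have neither injectivity nor surjectivity. If you want to keep your architecture, the efficient fix is to replace the inverse-construction by the paper's two steps: cite the linear bases of \cite{BKL24} for injectivity of $r$ (which persists under localization), and do the explicit $(\mathrm{E}')$-plus-bad-arc computation to show every negative-state arc lies in $\mathrm{im}\,r$. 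Finally, note that the interior punctures genuinely play no role here (the maps $m$, $p$ and the localization only touch a neighborhood of $\partial\underline{\Sigma}$), so the reduction you suggest to the $V_\circ=\emptyset$ case of \cite{CL22} is sound, but it still leaves the same verification to be done there.
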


\begin{proof}
We already have a homomorphism 
\begin{equation}
	r = p \circ m : \cS_{q}^{\MRY}(\Sigma) \to \cS_{q}^{\LRY}(\Sigma) \to \overline{\cS}_{q}^{\LRY}(\Sigma). 
\end{equation}
For the algebras appear here, we have explicit $\CC[v_i^{\pm1}]$-module bases, consisting of tangles without any intersection \cite[Theorem 3.6, Theorem 3.11, Proposition 6.2]{BKL24}. By checking that the basis of $\cS_q^\MRY (\Sigma)$ maps to a subset of the basis of $\overline{\cS}_q^\LRY (\Sigma)$, we may conclude that $r$ is a monomorphism. 

For any boundary arc class $\alpha \in \cS_{q}^{\MRY}(\Sigma)$ and its image $r(\alpha) \in \overline{\cS}_{q}^{\LRY}(\Sigma)$, the same curve with negative states $\overline{r(\alpha)}$ is the multiplicative inverse, so $r(\alpha)\overline{r(\alpha)}=1=\overline{r(\alpha)}r(\alpha)$ \cite[Proposition 7.4]{CL22}. By the universal property of the localization, we obtain a localized homomorphism $r : \cS_{q}^{\MRY}(\Sigma)[\partial^{-1}] \to \overline{\cS}_{q}^{\LRY}(\Sigma)$. Since the original map $r$ is injective, so is the localized $r$. 

To show the surjectivity of $r$, it is sufficient to show that any arc $\alpha$ with negative state is in $\mathrm{im}\; r$. Deforming $\alpha$ near a boundary marked point and applying the relation (E') in Definition \ref{def:LRY}, we obtain the following relation. 

\begin{align*}
&\begin{array}{c}\includegraphics[scale=0.18]{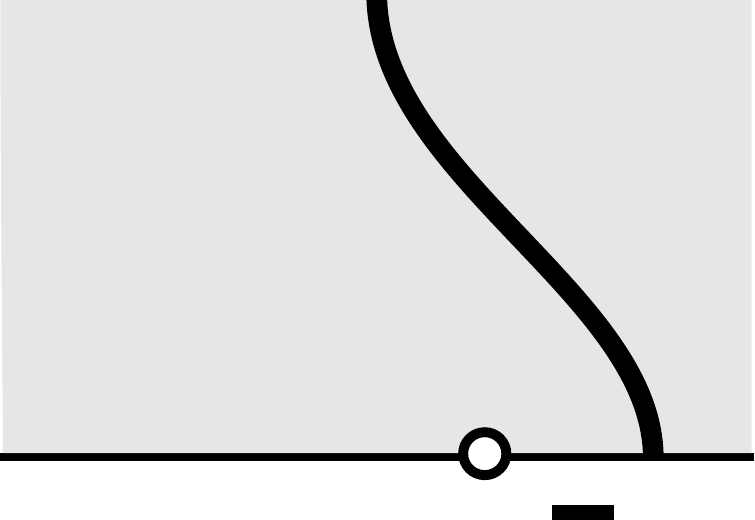}\end{array}=q^{1/2}\begin{array}{c}\includegraphics[scale=0.18]{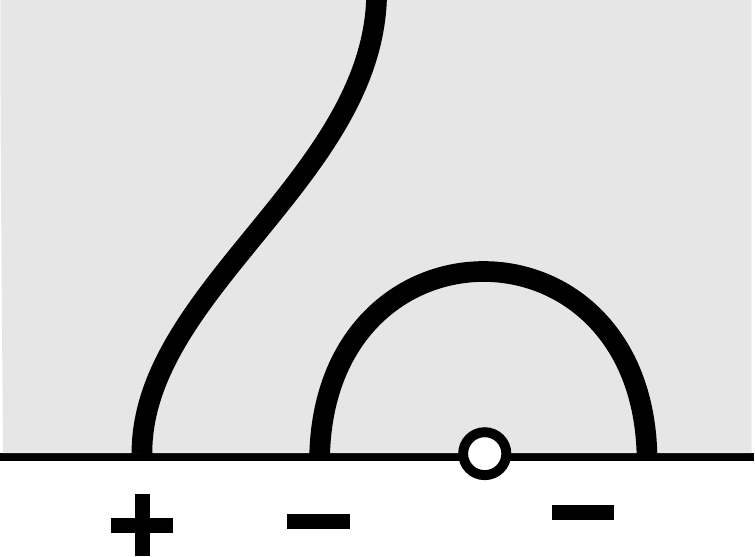}\end{array}-q^{5/2}\begin{array}{c}\includegraphics[scale=0.18]{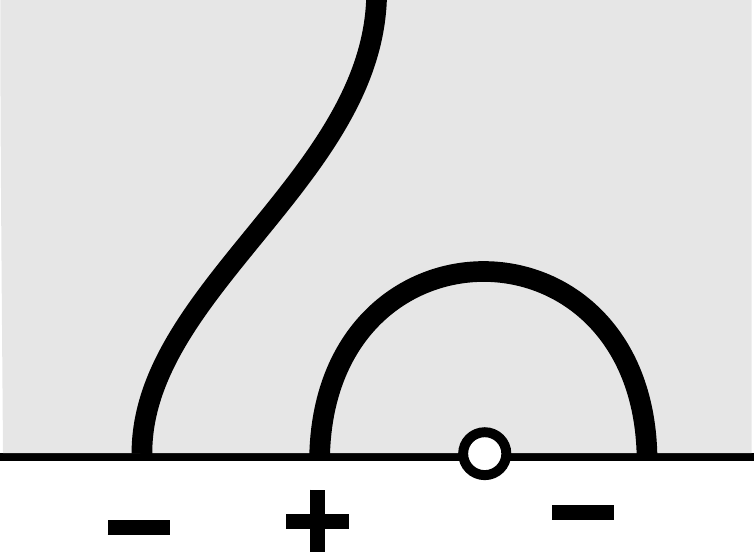}\end{array}=q^{1/2}\begin{array}{c}\includegraphics[scale=0.18]{drawing/sticking_pmm.pdf}\end{array},
\end{align*}

The first equality is from the skein relation and the second one is from the bad arc relation. The last diagram is in $\mathrm{im}\; r$. 
\end{proof}

The proof of Proposition \ref{prop:MRYvsLRY} does not affect the area near the interior punctures. Thus, the same proof works for $\cS_{q}^{\rM+}(\Sigma)$ and $\cS_{q}^{\rM}(\Sigma)$ as well. In summary, we have the equalities in Equation \eqref{eqn:skeinalgebradiagram}. 

\section{Finite generation of skein algebra}\label{sec:finitegeneration}

In this section, we prove that the skein algebras we consider are always finitely generated. In a way, this is rather surprising--- as we will see later, the skein algebra contains a copy of a cluster algebra, but in many cases the cluster algebra is not finitely generated. 
Here we give a skein theoretic proof, extending \cite{Bul99} and \cite{BKWP16}, where the finite generation of $\cS_q(\Sigma)$ and $\cS_q^\mathrm{RY}(\Sigma)$ were shown. Alternatively, one may prove the finite generation via quantum trace, as done in \cite{BKL24} for $\overline{\cS}_{q}^{\LRY}$. 

Our strategy is based on the following observation. For a marked surface $\Sigma$, the interior punctures in $V_{\circ}$ can be drawn on the boundary of a small circular disk $D$ on $\mathrm{int}\;\Sigma$ (Figure \ref{fig:surfacewithD}). Then we may understand $\Sigma$ as a union of a small open neighborhood $\widetilde{D}$ of $D$ and the outside of $D$, that is, $\Sigma \setminus \mathrm{int}\; D$. The latter is a surface without interior puncture -- now all marked points are on the boundary. We show that $\cS_{q}^{\MRY}(\Sigma)$ is generated by the arc classes in $D$ and the curve classes whose supports are in $\Sigma \setminus \mathrm{int}\; D$. The former has an explicit finite set of generators \cite[Section 5]{ACDHM21}. For the latter, we may employ an extension of the algorithm in \cite{Bul99}. 

Our discussion will be focused on simple curves, which are curves without any intersections in its diagram.  Skein relations (A) and (D) imply that the $\CC[v_i^{\pm1}]$-algebra $\cS_q^\MRY (\Sigma)$ is generated by simple curves. 

For a simple curve $\alpha$, let $c(\alpha)$ be the number of connected components of $\mathrm{int}\; D \setminus \alpha$. We may extend the definition to curves in the isotopy class of $\alpha$, by taking $c(\alpha)$ as the minimum of the possible numbers of connected components. The \emph{geometric intersection number} of $\alpha$ and $D$ is $i(\alpha) := c(\alpha) - 1$, and $i(\alpha) = 0$ if $\alpha$ does not meet $\mathrm{int}\; D$. 

\begin{figure}
\includegraphics[height=0.12\textheight]{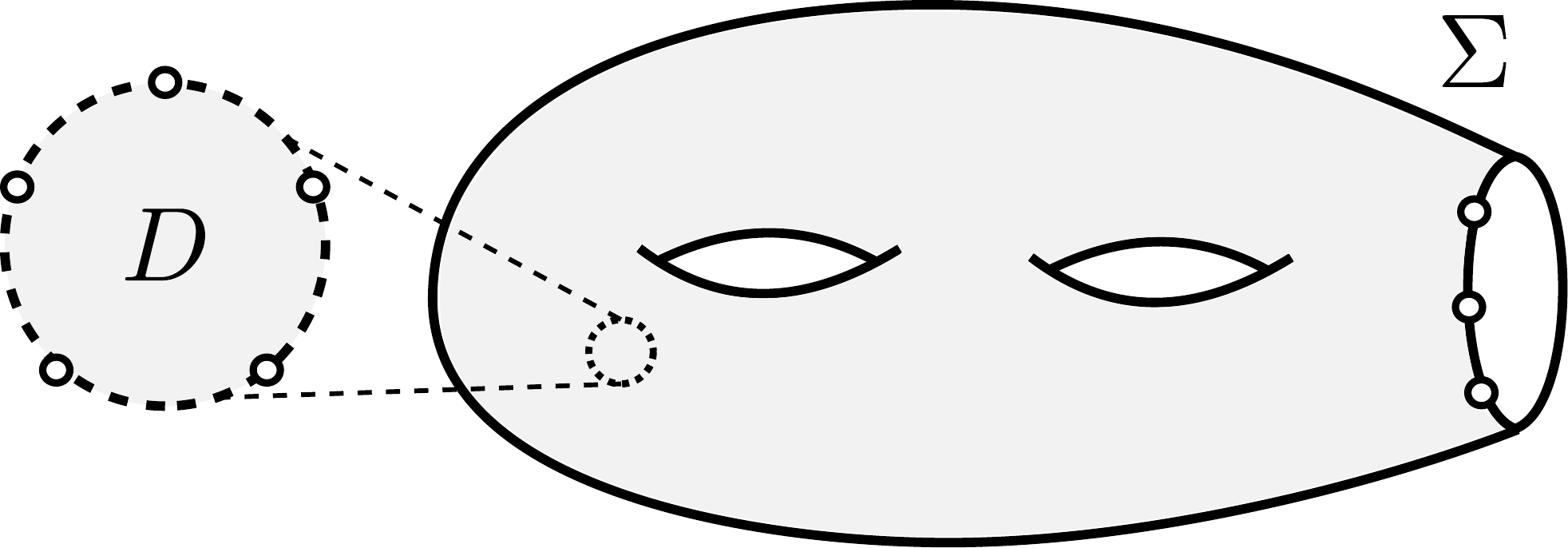}
\caption{A surface $\Sigma$ with a disk $D$ with all interior punctures on it}
\label{fig:surfacewithD}
\end{figure}

We may assume that $V_\circ\subset \partial D$ are arranged counterclockwise with respect to the center of $D$. Let $\beta_{ij}$ be the straight line segment connecting $v_{i}$ and $v_{j}$. 

\begin{lemma}\label{lem:pushout}
Let $\alpha$ be a simple curve that intersects $\mathrm{int}\; D$, so $i(\alpha) > 0$. Then $\alpha$ is generated by $\CC[v_i^{\pm1}]$-subalgebra generated by skeins which do not intersect $\mathrm{int}\; D$ and $\{\beta_{ij}\}$.
\end{lemma}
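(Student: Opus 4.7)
The plan is induction on the geometric intersection number $i(\alpha) \geq 1$. Since $D$ is a disk and $\alpha \cap D$ is a finite disjoint union of properly embedded arcs, I can choose an \emph{innermost chord} $c \subset \alpha \cap D$: one that together with an arc $a \subset \partial D$ bounds a subdisk $R \subset D$ containing no other chord of $\alpha$. Write $k := |a \cap V_\circ|$ for the number of interior punctures on the $\partial D$-portion of $\partial R$.

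First I would dispose of the easy case $k = 0$: here $c$ can be isotoped in $\Sigma \setminus V$ across $a$ entirely out of $\mathrm{int}\,D$ (legitimately, since $D \subset \mathrm{int}\,\Sigma$), yielding a simple curve $\alpha'$ isotopic to $\alpha$ with $i(\alpha') = i(\alpha) - 1$, at which point the primary induction hypothesis applies. The bulk of the argument is the remaining case $k \geq 1$, which I would handle by a secondary induction on $k$. The idea is to trim $R$ one puncture at a time using the $\beta_{ij}$'s: pick an extreme puncture $v \in a \cap V_\circ$, introduce an auxiliary chord $\beta_{v w} \subset D$ with $v_w \notin R$, and resolve the resulting configuration of $c$ and $\beta_{vw}$ near $v$ using the skein relations (A), (C), and (D). Each resulting term is either a product of $\beta_{ij}$'s with a skein disjoint from $\mathrm{int}\,D$ (possibly multiplied by $v^{-1}$, which lies in the coefficient ring $\CC[v_i^{\pm1}]$), or a simple curve whose innermost chord bounds a region with strictly fewer than $k$ punctures; in either event, the inner inductive hypothesis closes the case.

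The main obstacle will be the base of the inner induction, $k = 1$, where $R$ contains a single puncture. Here one must ``split'' the chord $c$ through the lone puncture using relation (D), producing broken arcs at $v$ whose pieces in $D$ decompose via the $\beta_{ij}$'s and whose pieces outside $D$ contribute a skein missing $\mathrm{int}\,D$. The bookkeeping is local near $v$, but it is precisely the place where the invertibility of $v$ in $\CC[v_i^{\pm1}]$ is essential, and one must take care that the scalar coefficients combine correctly and that no new chord re-enters $\mathrm{int}\,D$ after the relations are applied.
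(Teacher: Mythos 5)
Your overall strategy coincides with the paper's: both arguments run a double induction on the geometric intersection number with $D$ and on the number $k$ of punctures in the region cut off by an innermost chord, with the puncture--skein relation used to peel punctures out of that region and the segments $\beta_{ij}$ emerging as the irreducible pieces. Your $k=0$ step and the observation that $v^{\pm1}$ lives in the coefficient ring are both fine and consistent with the paper.

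The one step I cannot accept as written is the inductive move for $k\ge 1$: ``introduce an auxiliary chord $\beta_{vw}$ \ldots and resolve the resulting configuration of $c$ and $\beta_{vw}$.'' Superimposing $\beta_{vw}$ on $\alpha$ and resolving the crossings is a computation of the \emph{product} $\beta_{vw}\cdot\alpha$ (or $\alpha\cdot\beta_{vw}$), and since $\beta_{vw}$ joins interior punctures it is \emph{not} invertible in $\cS_q^{\MRY}(\Sigma)$ --- only boundary arc classes are ever localized --- so an identity for $\beta_{vw}\alpha$ does not let you solve for $\alpha$. The paper needs no auxiliary curve: it applies relation (D) read from right to left directly to the chord $B$ of $\alpha$ passing next to a puncture $v$ on $\partial R$, rewriting $B$ as $v\cdot(\text{$B$ split into two arcs ending at }v)$ minus $(\text{$B$ pushed to the other side of }v)$, up to powers of $q$. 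The second term has strictly fewer punctures in its innermost region; in the first term, when the split is performed at a puncture adjacent to an endpoint of $B$ that is itself a puncture, the short split-off piece is isotopic to a $\beta_{ij}$ and factors out. This also exposes a small omission in your accounting of the resulting terms: the split terms are simple curves whose innermost chord now \emph{ends at a puncture}, which is neither ``a product of $\beta_{ij}$'s with a skein missing $\mathrm{int}\,D$'' nor automatically a curve with smaller $k$; the paper handles these as a separate case (its Figure on intersection number reduction when $B$ meets a puncture) and tracks a lexicographic measure in $(i(\alpha),k)$ to see that the recursion terminates. If you replace the auxiliary-chord step by this direct application of (D) and add the ``chord ending at a puncture'' case to your inner induction, your argument becomes the paper's.
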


\begin{proof}
Suppose that $\alpha$ has one component with nontrivial intersections with $\mathrm{int}\; D$. Let $B$ be one of the connected components of $\alpha \cap \mathrm{int}\; D$ such that one of two components of $(\mathrm{int}\; D)\setminus B$ does not have any other components of $\alpha$. 

There are two possibilities. If one endpoint of $B$ is one of the interior punctures, then as in Figure \ref{fig:intnumreduction1}, $\alpha$ can be described as a combination of two simple curves $\alpha_{1}, \alpha_{2}$ and one $\beta_{ij}$. Note that $\alpha_{1}$ and $\alpha_{2}$ either have smaller intersection number $i$, or the component of $(\mathrm{int}\; D)\setminus B$ without other components of $\alpha$ has strictly smaller number $k$ of interior punctures. When $k = 0$, the intersection number strictly decreases, too. Thus, after finitely many steps, we may describe $\alpha$ in terms of curves with $i = 0$, and $\beta_{ij}$.

If none of the endpoints of $B$ is not an interior puncture, applying one puncture-skein relation, we may describe $\alpha$ as a curve in the previous paragraph and another curve $\alpha'$ with smaller number $k$ of interior punctures on one of the components in $(\mathrm{int}\; D) \setminus B$ (Figure \ref{fig:intnumreduction2}). Therefore, after applying the relation finite times, we obtain the conclusion. 
\end{proof}

\begin{figure}
\includegraphics[height=0.11\textheight]{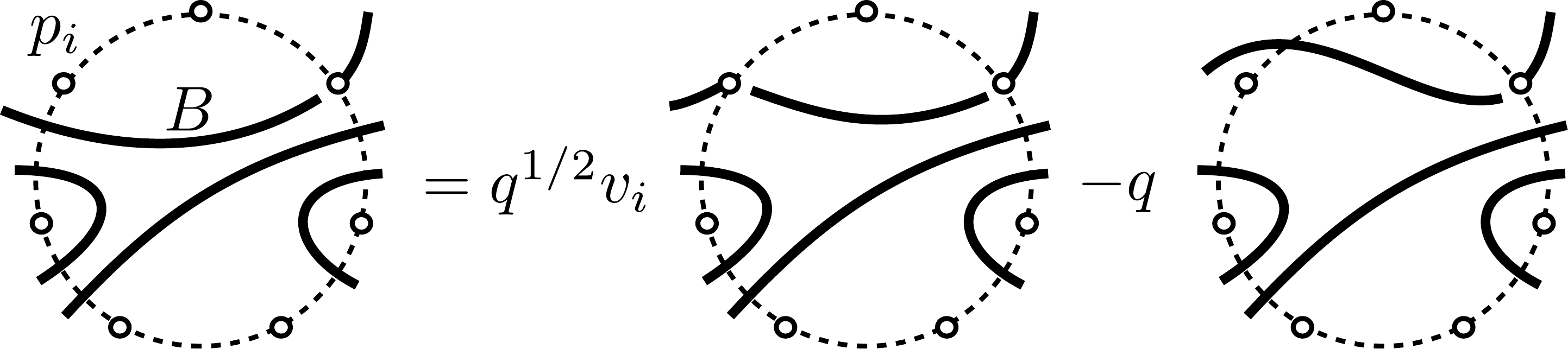}
\caption{Intersection number reduction -- when $B$ meets a puncture}
\label{fig:intnumreduction1}
\end{figure}

\begin{figure}
\includegraphics[height=0.11\textheight]{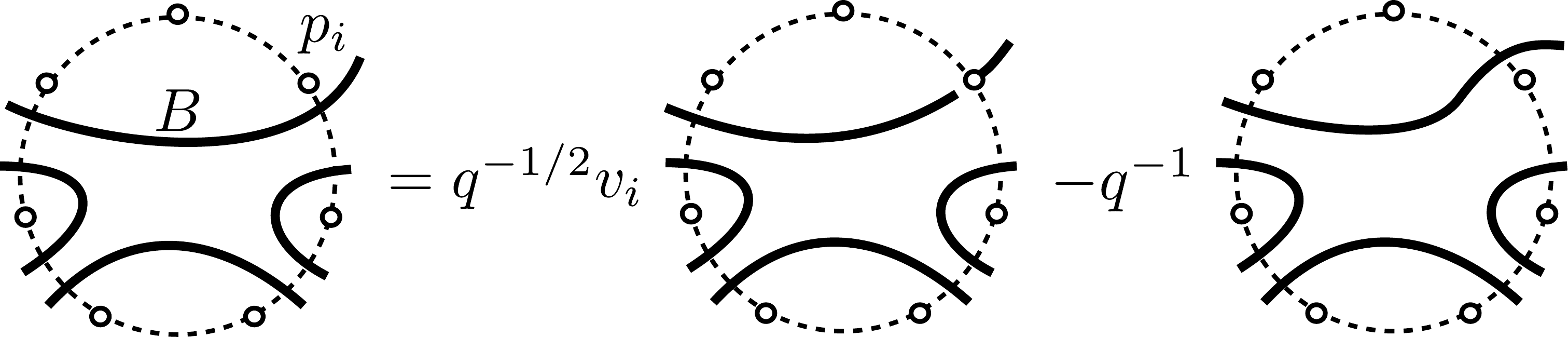}
\caption{Intersection number reduction -- when $B$ does not meet a puncture}
\label{fig:intnumreduction2}
\end{figure}

Now, it is sufficient to show that any skeins whose supports are in $\Sigma \setminus \mathrm{int}\; D$ is finitely generated. For this purpose, by stretching the boundary of $D$, we may draw $\Sigma \setminus \mathrm{int}\; D$ as in the right figure in Figure \ref{fig:handlebody}. This diagram is obtained by attaching several handles on a rectangular strip, and all marked points are on the strip itself, not on attached handles. In particular, there is no marked points on the handles. The long dashed boundary is $\partial D$, and the original boundary of $\Sigma$ is drawn on the top row. 

\begin{figure}
\includegraphics[height=0.12\textheight]{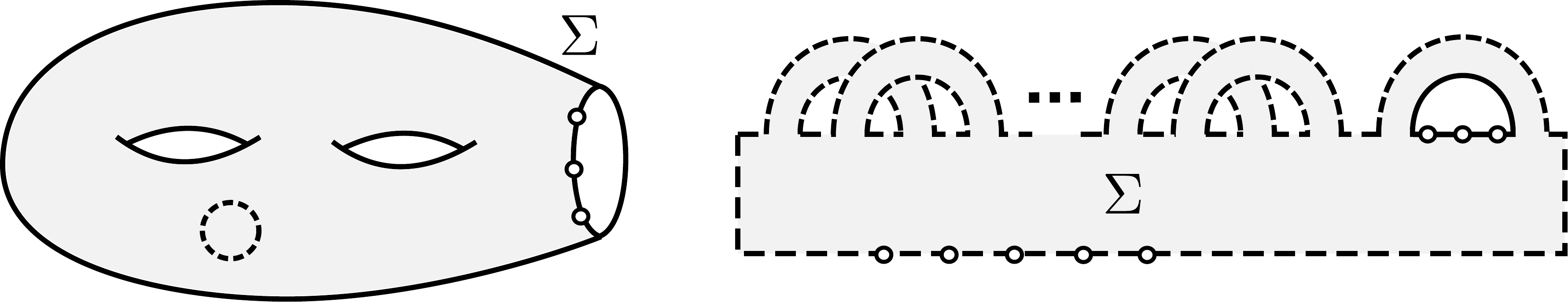}
\caption{Drawing of $\Sigma \setminus \mathrm{int}\; D$ -- the outer boundary is $\partial D$}
\label{fig:handlebody}
\end{figure}

Suppose that $\alpha \in \cS_{q}^{\MRY}(\Sigma)$ is a simple connected curve whose support is in $\Sigma \setminus \mathrm{int}\; D$. We fix an arbitrary orientation on $\alpha$. Assume that there is a handle $H$ on $\Sigma$ such that $\alpha$ traverses $H$ multiple times in the same direction. See the left figure in Figure \ref{fig:reduction1}. Then an ordinary skein relation can be applied to rewrite the curve in terms of curves that traverse $H$ strictly fewer times (Figure \ref{fig:reduction1}). Note that the rightmost diagram is a product of  two disjoint curve classes, and each of these curves traverses $H$ strictly fewer times than the original diagram on the left hand side of the equation. By applying this argument in pairs, we may assume that $\alpha$ travels each handle at most twice, and if so, it travels exactly twice in the opposite direction. 

\begin{figure}
\includegraphics[height=0.1\textheight]{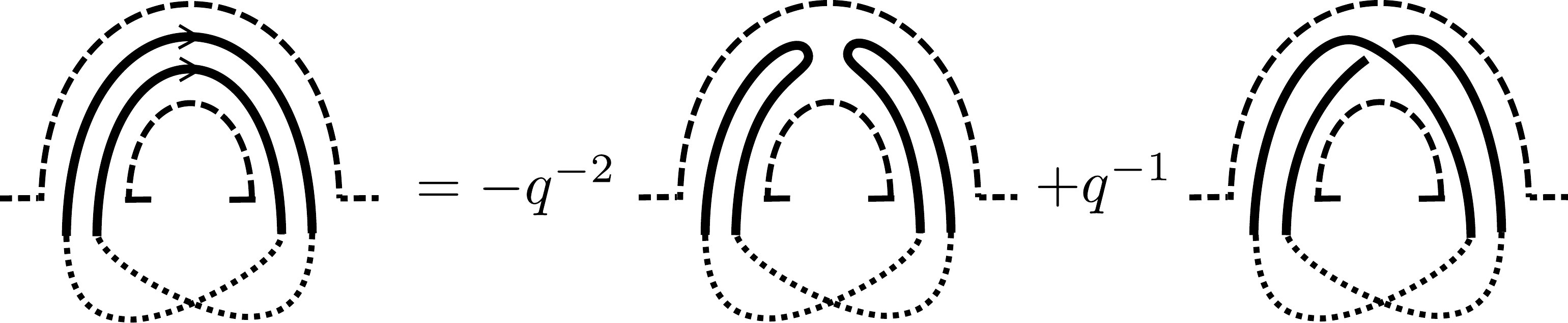}
\caption{Complexity reduction 1}
\label{fig:reduction1}
\end{figure}

Next, suppose that there is a handle $H$ where $\alpha$ travels twice, in opposite directions (top left of Figure \ref{fig:reduction2}). We may employ the ordinary skein relations three times, to write it as a combination of curves which traverse the handle at most once. See Figure \ref{fig:reduction2}. 

 Note that Bullock applied the above algorithm only to loops in \cite{Bul99}. However, the argument works equally well for arc classes. This is because the resolutions in Figures \ref{fig:reduction1} and \ref{fig:reduction2} do not change the part of the arc outside $H$ (on the parts of the arc diagram that are dotted in the figures). 

\begin{figure}
\includegraphics[height=0.31\textheight]{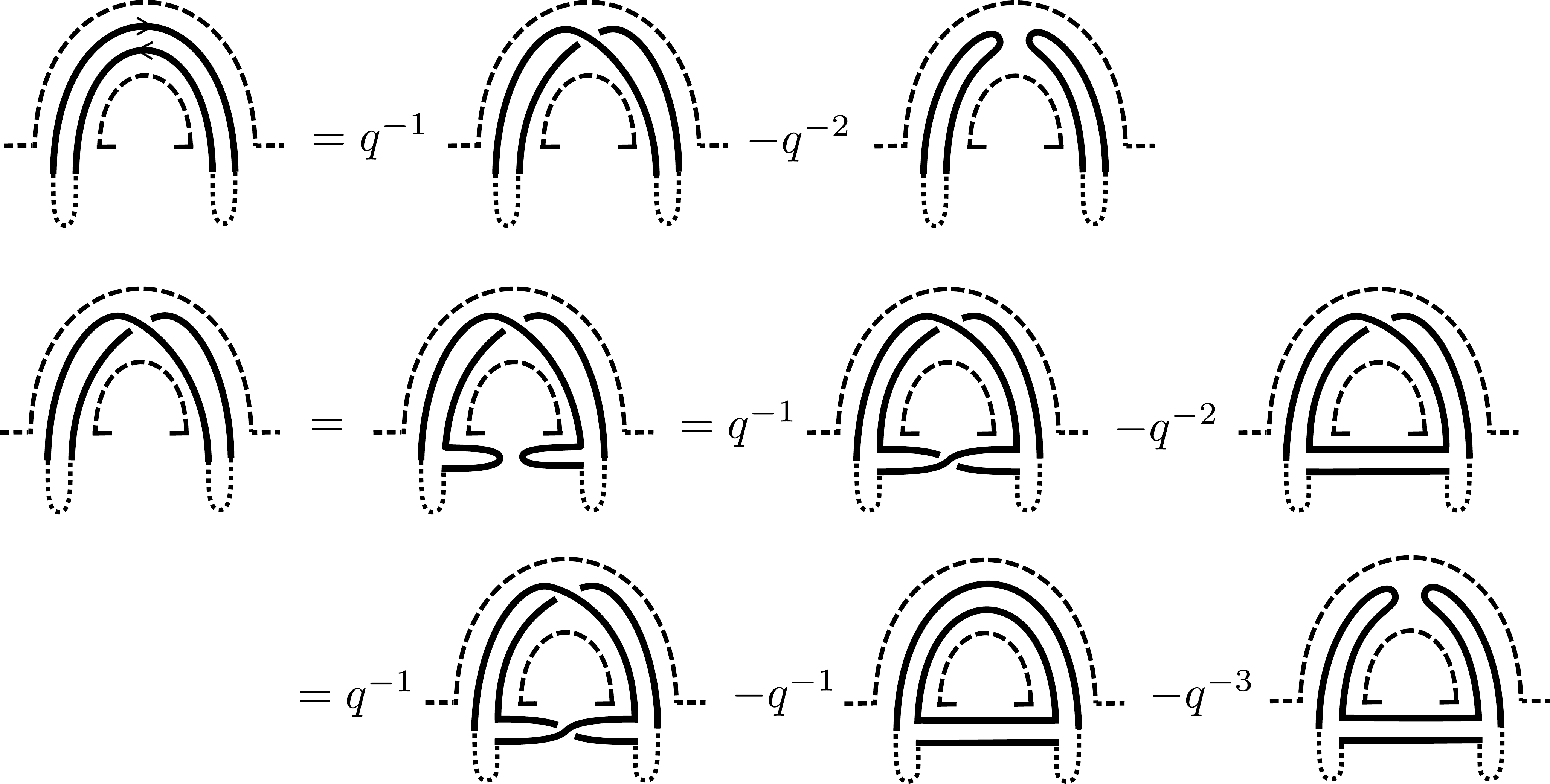}
\caption{Complexity reduction 2}
\label{fig:reduction2}
\end{figure}

In summary, any simple curve can be generated by simple curves that traverse each handle at most once.   These curves can be described by choosing the finite sequence of traveling handles in the case of loops and by choosing the initial/terminal marked points and the sequence of traveling handles in the case of arcs. (We do not claim that all sequences of handles will give simple curves and arcs -- they may induce self-intersections.) 
Thus, there are only finitely many possibilities. As a result, we obtain the proof of Theorem \ref{thm:finitegeneration}.

\begin{corollary}\label{cor:finitegenerationLRY}
The skein algebras $\cS_{q}^{\LRY}(\Sigma)$ and $\overline{\cS}_{q}^{\LRY}(\Sigma)$ are finitely generated. 
\end{corollary}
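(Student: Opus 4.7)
The plan is to treat $\overline{\cS}_{q}^{\LRY}(\Sigma)$ and $\cS_{q}^{\LRY}(\Sigma)$ separately, drawing on two different ingredients already in hand: Proposition \ref{prop:MRYvsLRY} for the former, and the reduction strategy of Section \ref{sec:finitegeneration} for the latter.

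First I would deduce finite generation of $\overline{\cS}_{q}^{\LRY}(\Sigma)$ directly from Theorem \ref{thm:finitegeneration} combined with the isomorphism $r : \cS_{q}^{\MRY}(\Sigma)[\partial^{-1}] \xrightarrow{\cong} \overline{\cS}_{q}^{\LRY}(\Sigma)$ of Proposition \ref{prop:MRYvsLRY}. Since $\underline{\Sigma}$ is compact, $\partial \underline{\Sigma}$ has only finitely many components, each carrying finitely many boundary marked points; consequently the multiplicative subset $\partial$ is generated by a finite list $\beta_{1}, \ldots, \beta_{k}$ of boundary arc classes. If $g_{1}, \ldots, g_{n}$ is a finite generating set for $\cS_{q}^{\MRY}(\Sigma)$ supplied by Theorem \ref{thm:finitegeneration}, then $\{g_{1}, \ldots, g_{n}, \beta_{1}^{-1}, \ldots, \beta_{k}^{-1}\}$ generates the Ore localization, hence $\overline{\cS}_{q}^{\LRY}(\Sigma)$.

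For $\cS_{q}^{\LRY}(\Sigma)$, the plan is to rerun the argument of Section \ref{sec:finitegeneration} with stated $\partial$-tangles replacing $V$-tangles. The skein relations (A), (B), and (D) all hold in $\cS_{q}^{\LRY}(\Sigma)$, and the reductions used in Lemma \ref{lem:pushout} take place entirely inside the disk $D$—far from $\partial \underline{\Sigma}$—so they neither create nor destroy boundary endpoints and the states at those endpoints are preserved. Lemma \ref{lem:pushout} therefore carries over verbatim and reduces the task to generating stated $\partial$-tangles supported in $\Sigma \setminus \mathrm{int}\; D$, which is a marked surface with only boundary marked points. For such surfaces, finite generation of the stated skein algebra is known (see, e.g., \cite{Le18,BKL24}); equivalently, the handle-traversal reductions illustrated in Figures \ref{fig:reduction1} and \ref{fig:reduction2} extend to stated $\partial$-tangles because those resolutions modify only the interior of a curve.

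The one point requiring care—and the only mild obstacle—is checking that every skein move invoked is compatible with the states at $\partial \underline{\Sigma}$. Since each resolution used in Lemma \ref{lem:pushout} and in the handle-traversal arguments leaves tangle endpoints and framings on the boundary untouched (and introduces no new boundary endpoints), this verification is routine, and the two reductions combine to produce a finite generating set for $\cS_{q}^{\LRY}(\Sigma)$.
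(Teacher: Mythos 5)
Your proposal is correct, but it takes a genuinely different route from the paper's proof, which is much shorter: the paper observes that $\cS_{q}^{\LRY}(\Sigma)$ is generated by the images under the moving-trick homomorphism $m$ of a finite generating set of $\cS_{q}^{\MRY}(\Sigma)$ together with the same (finitely many) curves carrying all possible states, and then obtains $\overline{\cS}_{q}^{\LRY}(\Sigma)$ for free as a homomorphic image of $\cS_{q}^{\LRY}(\Sigma)$. Your argument for $\overline{\cS}_{q}^{\LRY}(\Sigma)$ via Proposition \ref{prop:MRYvsLRY} and the finite generation of an Ore localization at a finitely generated Ore set is exactly the alternative the paper records in Remark \ref{rmk:minimalgenerator}'s neighboring remark, and it is sound. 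Your argument for $\cS_{q}^{\LRY}(\Sigma)$ --- rerunning the whole reduction of Section \ref{sec:finitegeneration} with stated $\partial$-tangles --- also works, and has the virtue of being self-contained and independent of $m$, but it duplicates work and glosses over two points you should make explicit: (i) reducing an arbitrary stated $\partial$-tangle to a polynomial in \emph{connected} simple stated diagrams requires not just relations (A) and (D) but also the height-exchange consequences of (E$'$) and (F$'$), since endpoints on a single boundary interval carry a height order that must be normalized before the tangle factors as a product of its components; and (ii) $\Sigma \setminus \mathrm{int}\, D$ is not a surface ``with only boundary marked points'' in the stated-skein sense, because the punctures $v_i$ sit on $\partial D$ and arcs ending there carry no states, so you should rely on your own observation that the handle-traversal resolutions of Figures \ref{fig:reduction1} and \ref{fig:reduction2} touch only the interiors of curves rather than quoting finite generation of stated skein algebras of unpunctured surfaces from \cite{Le18, BKL24}. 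With those two points spelled out, your proof is complete; the paper's route simply buys brevity by funneling everything through Theorem \ref{thm:finitegeneration} and the map $m$.
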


\begin{proof}
The skein algebra $\cS_{q}^{\LRY}(\Sigma)$ can be generated by 1) the image of a generating set of $\cS_q^\MRY(\Sigma)$ by $m : \cS_{q}^{\MRY}(\Sigma) \to \cS_q^\LRY(\Sigma)$, and 2) the same set of curves with different states.
The algebra $\cS_{q}^{\MRY}(\Sigma)$ is generated by finitely many loops and arcs, and for each arc connecting boundary marked points, there are only finitely many ways to impose the states. Therefore, $\cS_{q}^{\LRY}(\Sigma)$ is finitely generated, too. A homomorphic image of a finitely generated algebra is also finitely generated, so the same thing holds for $\overline{\cS}_{q}^{\LRY}(\Sigma)$. 
\end{proof}

\begin{remark}
One can show $\overline{\cS}_{q}^{\LRY}(\Sigma)$ is finitely generated in another way, directly from from Proposition \ref{prop:MRYvsLRY}. Since $\cS_{q}^{\MRY}(\Sigma)[\partial^{-1}]$ is an Ore localization of a finitely generated algebra by a finitely generated multiplicative subset $\partial$, it is also finitely generated. 
\end{remark}

\begin{remark}\label{rmk:minimalgenerator}
The finite generating set from the proof of Theorem \ref{thm:finitegeneration} is not guaranteed to be minimal. Note that the non-boundary handles appear as pairs. Suppose that $\alpha$ travels both handles (say $H_{1}$ and $H_{2}$) in a pair. Bullock proved that $\alpha$ can be generated by curves that passes $H_2$ immediately traveling after $H_1$ \cite{Bul99}. A minimal set of generators is not known, and is closely related to the computation of an explicit presentation of $\cS_q^\MRY (\Sigma)$. See Section \ref{ssec:presentation}. 
\end{remark}

If we remove each interior puncture by a new boundary component without a marked point, the same proof of Theorem \ref{thm:finitegeneration} works for $\cS_{q}^{\rM+}(\Sigma)$. Then we also obtain that  $\cS_{q}^{\rL+}(\Sigma)$ is finitely generated in the same way. Any homomorphic image of a finitely generated algebra is also finitely generated, so we obtain the following corollary. 

\begin{corollary}\label{cor:finitegeneration}
The skein algebras $\cS_{q}^{\rM+}(\Sigma)$, $\cS_{q}^{\rM}(\Sigma)$, $\cS_{q}^{\rL+}(\Sigma)$, $\cS_{q}^{\rL}(\Sigma)$, $\overline{\cS}_{q}^{\rL+}(\Sigma)$, and $\overline{\cS}_{q}^{\rL}(\Sigma)$ are all finitely generated.
\end{corollary}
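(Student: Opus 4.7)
The plan is to leverage Theorem \ref{thm:finitegeneration}, the diagram \eqref{eqn:skeinalgebradiagram}, and Corollary \ref{cor:finitegenerationLRY} so that only the ``$+$'' variants require a genuinely new argument, while the remaining algebras follow formally as homomorphic images of finitely generated algebras.

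First, I would handle $\cS_{q}^{\rM+}(\Sigma)$ directly. By definition, its generators are $V$-tangles that avoid the interior punctures, and only relations (A), (B), (D), (E), (F) are imposed --- in particular the puncture-removing relation (C) is \emph{not} present. The key observation is that since no generator meets an interior puncture, one may perform the topological modification suggested in the paragraph above the corollary: replace each $v_i \in V_\circ$ by a small unmarked boundary circle, obtaining a new marked surface $\Sigma'$ with $V'_\circ = \emptyset$. Tangles in $\Sigma$ that avoid $V_\circ$ correspond bijectively to tangles in $\Sigma'$, and the relations (A), (B), (E), (F) are local near curves and boundary marked points, so they transport unchanged; relation (D) plays no role since no generator crosses a puncture. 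Thus $\cS_{q}^{\rM+}(\Sigma) \cong \cS_{q}^{\MRY}(\Sigma')$ (or, equivalently, Muller's skein algebra of $\Sigma'$). Running the proof of Theorem \ref{thm:finitegeneration} on $\Sigma'$ --- which on a surface without interior punctures reduces to the Bullock-type handle-reduction argument already in Figures \ref{fig:reduction1} and \ref{fig:reduction2} --- produces a finite generating set.

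Next, I would deduce finite generation of $\cS_{q}^{\rL+}(\Sigma)$ exactly as in Corollary \ref{cor:finitegenerationLRY}: the moving trick homomorphism $m : \cS_{q}^{\rM+}(\Sigma) \to \cS_{q}^{\rL+}(\Sigma)$ together with the finitely many choices of boundary states on each arc in a generating set of $\cS_{q}^{\rM+}(\Sigma)$ produces a finite generating set of $\cS_{q}^{\rL+}(\Sigma)$. The relations (E$'$) and (F$'$) do not interfere, since we are only claiming generation, not a presentation.

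Finally, for the remaining four algebras I would invoke the diagram \eqref{eqn:skeinalgebradiagram}. The maps $\pi$ and $p$ are surjective by construction, so $\cS_{q}^{\rM}(\Sigma) = \pi(\cS_{q}^{\rM+}(\Sigma))$, $\cS_{q}^{\rL}(\Sigma) = \pi(\cS_{q}^{\rL+}(\Sigma))$, $\overline{\cS}_{q}^{\rL+}(\Sigma) = p(\cS_{q}^{\rL+}(\Sigma))$, and $\overline{\cS}_{q}^{\rL}(\Sigma) = p(\cS_{q}^{\rL}(\Sigma))$. Since a homomorphic image of a finitely generated $\CC[v_i^{\pm 1}]$-algebra is finitely generated, each of these inherits finite generation from the two ``$+$'' cases handled above.

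The main potential obstacle is the first step: verifying that collapsing the interior punctures to unmarked boundary components really yields a well-defined isomorphism $\cS_{q}^{\rM+}(\Sigma) \cong \cS_{q}^{\MRY}(\Sigma')$ rather than just a surjection. One must check that no relation present in $\cS_{q}^{\MRY}(\Sigma')$ is missing from $\cS_{q}^{\rM+}(\Sigma)$, i.e.\ that all such relations are realized by tangles disjoint from $V_\circ$; this is immediate from the locality of relations (A), (B), (E), (F) and the fact that (D) is vacuous after the modification. Once this is checked, the rest of the argument is a routine package of the surjectivity of $\pi$ and $p$ together with the moving trick.
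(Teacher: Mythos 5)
Your proposal is correct and follows essentially the same route as the paper: the paper likewise replaces each interior puncture by an unmarked boundary component so that the handle-reduction argument of Theorem \ref{thm:finitegeneration} applies to $\cS_{q}^{\rM+}(\Sigma)$, deduces $\cS_{q}^{\rL+}(\Sigma)$ via the moving trick as in Corollary \ref{cor:finitegenerationLRY}, and obtains the remaining four algebras as homomorphic images under $\pi$ and $p$. Your extra care about whether the puncture-to-boundary replacement gives an isomorphism rather than merely a surjection is more than the paper records, but a surjection already suffices for finite generation.
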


\begin{remark}\label{rmk:algebrastructure}
For any oriented surface $\Sigma$ with $n$ interior punctures, we may draw the punctures on a small circular disk $D$ as we did in the proof of Theorem \ref{thm:finitegeneration}. Then for a small open neighborhood $D_n$ of $D$, there is a natural morphism 
\begin{equation}
	\cS_{q}^{\MRY}(D_n) \to \cS_{q}^{\MRY}(\Sigma). 
\end{equation}
In other words, $\cS_{q}^{\MRY}(\Sigma)$ admits an $\cS_{q}^{\MRY}(D_n)$-algebra structure. Note that $D_n$ is homeomorphic to $\RR^2_n$, the $n$-punctured $\RR^2$. Its skein algebra was calculated in \cite[Section 5]{ACDHM21}. 
\end{remark}

\section{Cluster algebra of surface} \label{sec:clusteralg}

We next consider the \emph{cluster algebra} $\cA(\Sigma)$ coming from the curves in a triangulable oriented surface $\Sigma$.  In this section, we give a brief definition of $\cA(\Sigma)$. A more detailed description and further properties of $\cA(\Sigma)$ can be found in \cite{FST08, FT18}.  In Section \ref{sec:compatibility}, we will explain its relationship with the skein algebra.

\subsection{Informal description}\label{ssec:informaldescription}

Let $\Sigma = (\underline{\Sigma}, V)$ be a surface that admits an ideal triangulation $\Delta$, with the set of vertices $V$. Here an ideal triangulation always contains all boundary arcs. Let $E := \{x_{i}\}$ be the set of edges in $\Delta$. The cluster algebra $\cA(\Sigma)$ is defined as some subalgebra of the field of fractions $\CC(E) := \CC(x_{i}|_{i \in I})$ generated by edges in $E$ subject to geometrically motivated relations.  Because the formal definition will require some additional structure on the edges, it may seem overly technical at first.  We thus start with an intuitive description to give the flavor of the construction and worry about the technical details later.

The basic construction of the cluster algebra is based on the following idea. For a given triangulation $\Delta$ and a choice of an internal diagonal edge $x$ of a quadrilateral, we may flip the diagonal edge $x$ in the triangulation to the other diagonal edge $x'$ of the quadrilateral.  This move produces a new triangulation $\Delta'$ where all edges are the same as in $\Delta$ except that $x$ is replaced by a flipped edge $x'$ (Figure \ref{fig:Ptolemy}). 

In the cluster algebra, we relate the edges $x$ and $x'$ using the \emph{Ptolemy relation}, which comes from the geometry of relating the lengths of the edges of the quadrilateral with its diagonals.   
More specifically, suppose that the quadrilateral has edges $x_{1}$, $x_{2}$, $x_{3}$, and $x_{4}$, and flipping the diagonal $x$ of the quadrilateral gives a new diagonal $x'$, as in Figure \ref{fig:Ptolemy}. In this case, the Ptolemy relation between $x$ and $x'$ is defined to be  $x x' = x_{2} x_{4} + x_{1}x_{3}$.   Observe that the product $ x x'$ is a `binomial' with respect to the quadrilateral edges $x_{1}$, $x_{2}$, $x_{3}$, and $x_{4}$, and that $x'$ is a rational function with respect to edges in $\Delta$.  As a result,  we can think of the edges in the new triangulation $\Delta'$ as belonging in the field of fractions of the edges of the original triangulation $\Delta$.  Moreover, the combinatorial changes between the edges of $\Delta$ and $\Delta'$ can also be encoded algebraically by way of mutation of the adjacency matrices, and we will provide these equations later.  

\begin{figure}
\includegraphics[height=0.13\textheight]{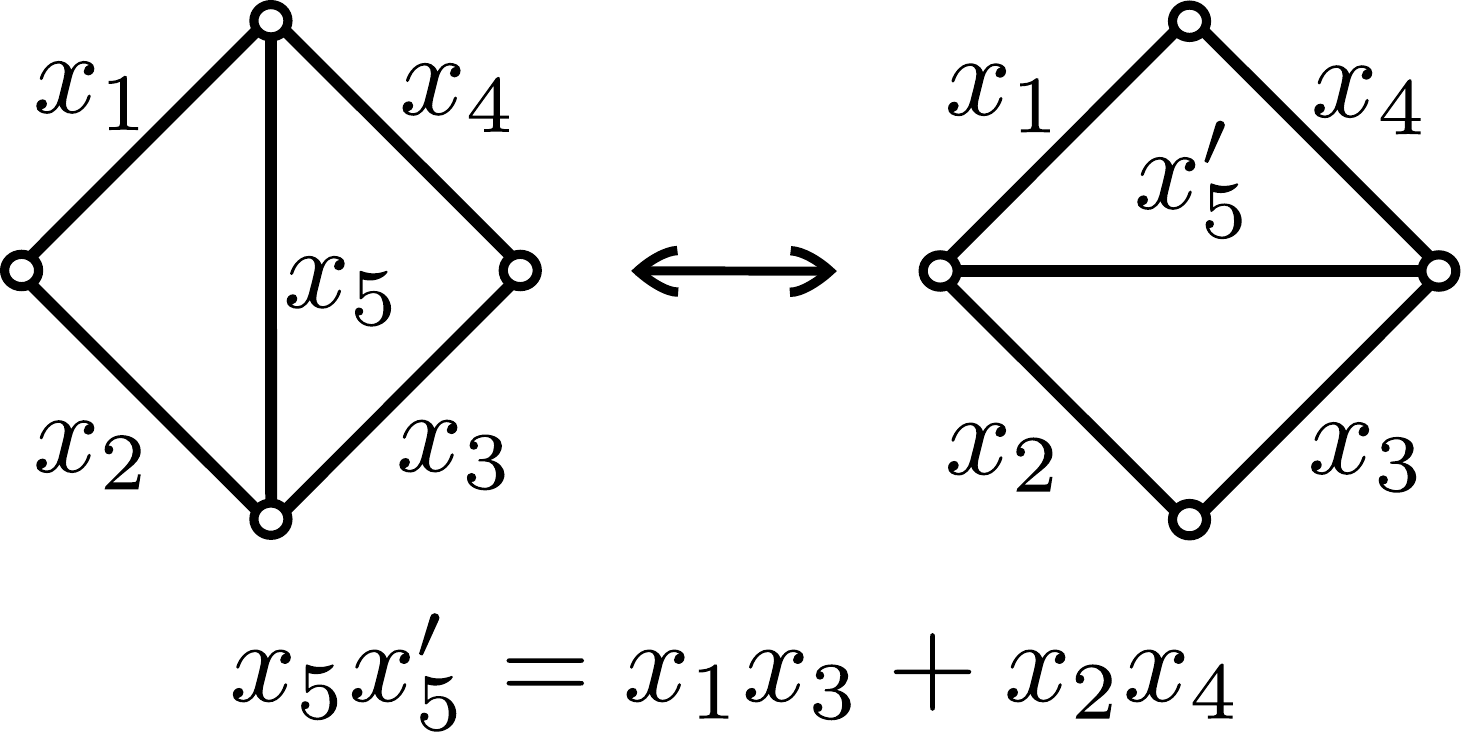}
\caption{Example of a flip of triangulation and the associated Ptolemy relation}
\label{fig:Ptolemy}
\end{figure}

The cluster algebra $\cA(\Sigma)$ we seek to define is very `close' to the subalgebra of $\CC(E)$ generated by all the edges of all possible triangulations subject to the Ptolemy relation and mutation of adjacency matrices.  Importantly, the resulting $\cA(\Sigma)$ can be regarded as an algebra of all arcs on $\Sigma$. Recall that, any embedded arc between two vertices belongs to some ideal triangulation on $\Sigma$. Moreover, any two ideal triangulations are connected by finitely many flips. In the algebra,  each time we flip an edge in $\Delta'$, we obtain a new triangulation with one new edge that is related to the old ones by the Ptolemy relation.  Although the number of generators of the cluster algebra is a priori infinite, it turns out that any arc can be written in terms of the edges of the original triangulation $\Delta$. The upshot is that we can think of any embedded arc of $\Sigma$ as belonging in $\cA(\Sigma)$, and the cluster algebra $\cA(\Sigma)$ is an algebra of curves.

However, there are some technical issues that arise when aligning this intuitive construction of $\cA(\Sigma)$ with the standard definition of cluster algebras.  In particular,  all elements of a cluster algebra should be flippable.  However, boundary edges are never diagonals of any quadrilaterial and hence never flippable.  This can be  accounted for by using a version of the cluster algebra where the boundary edges are \emph{frozen variables}. In cluster algebra literature, it is common to include the multiplicative inverses of the frozen variables, so we will take the same convention.  

A more serious issue comes up when there are interior punctures.  In particular, consider the part of an ideal triangulation in Figure \ref{fig:noflip}. One may flip the top interior edge, to get a new triangulation with a self-folded triangle. However, now the bottom edge connecting the two bottom punctures (inside of the self-folded triangle) is no longer a diagonal of a quadrilateral and thus cannot be flipped.  

\begin{figure}
\includegraphics[height=0.1\textheight]{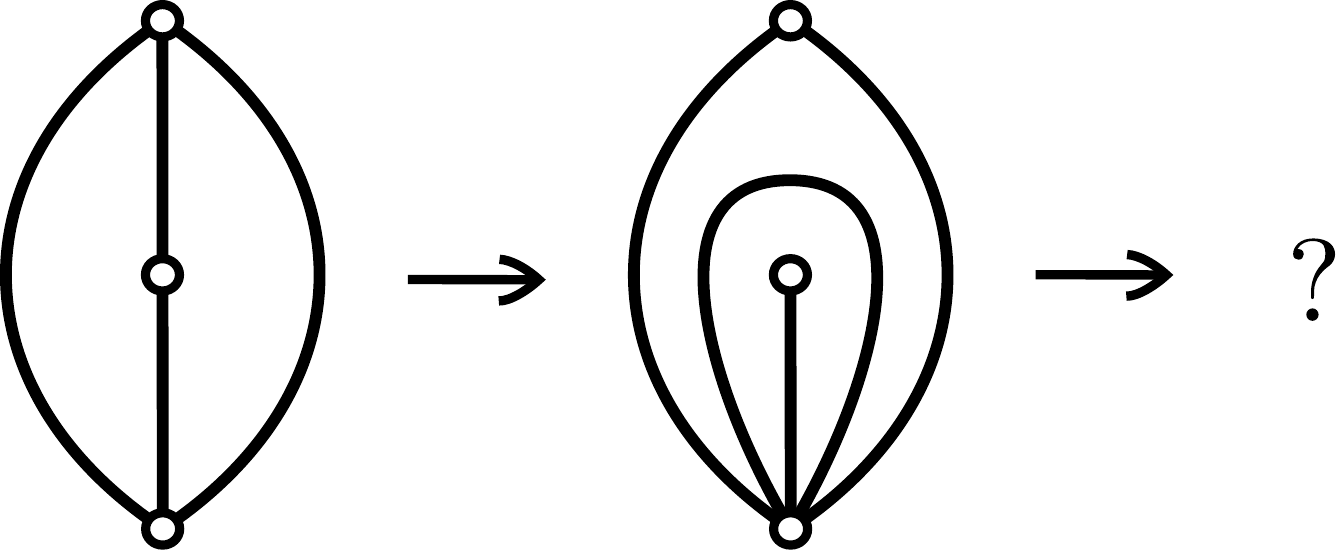}
\caption{No flip for self-folded triangles}
\label{fig:noflip}
\end{figure}

To rectify this issue, in \cite{FST08}, Fomin, Shapiro, and Thurston introduced the additional structure of \emph{tagged arcs} to account for the combinatorics of self-folded triangles.  A tagged arc is a topological arc where each end can be decorated by one of two taggings, plain or notched.  The notched end is denoted by drawing a small bowtie, and the plain end is unmarked. The three possible types of tagged arcs are illustrated in Figure \ref{fig:taggedarcs}.

\begin{figure}
\includegraphics[height=0.04\textheight]{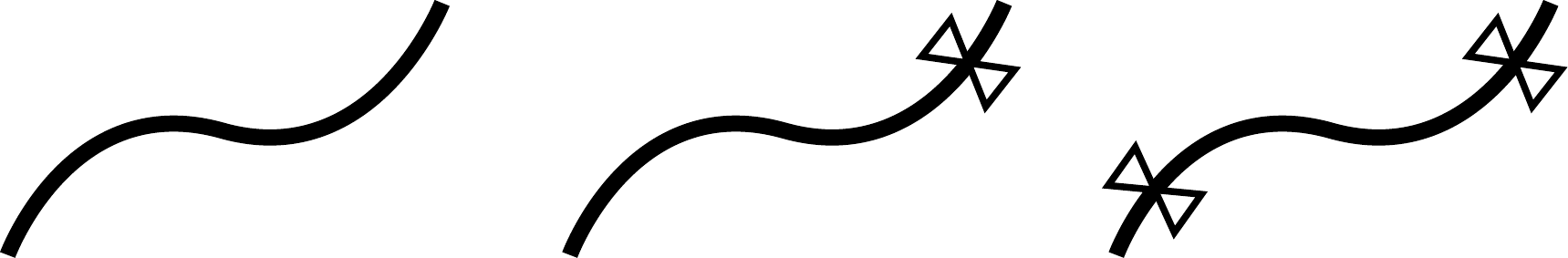}
\caption{Three types of tagged arcs}
\label{fig:taggedarcs}
\end{figure}

Triangulations can then be generalized to include tagged arcs, and every tagged arc can now be flipped as in Figure \ref{fig:taggedflip}.  This new diagrammatic procedure for tagged arcs is called a \emph{tagged flip}, and its combinatorics are designed to match that of ordinary flips along diagonals of quadrilaterals.   For the details, see \cite[Section 7]{FST08}, \cite[Section 3]{MW24}.

\begin{figure}
\includegraphics[height=0.1\textheight]{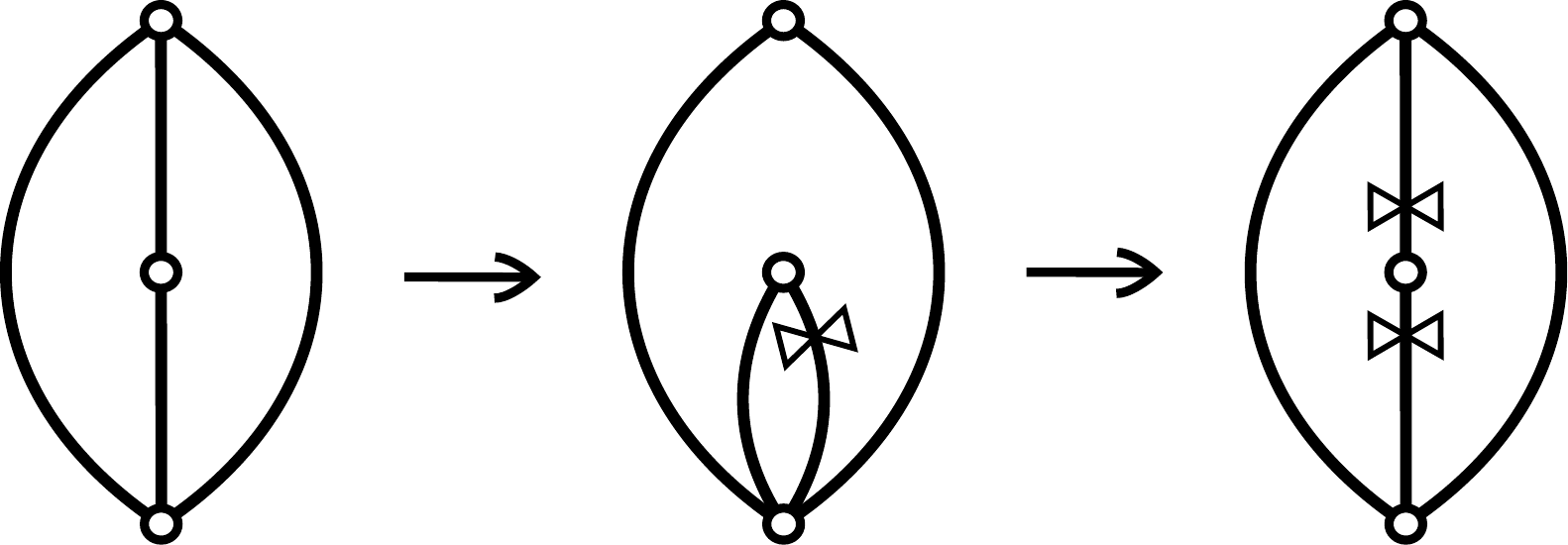}
\caption{Tagged flip}
\label{fig:taggedflip}
\end{figure}

We emphasize that a tagged arc can have a notched end only at an interior puncture. Boundary arcs and arcs connecting boundary marked points cannot have any tags. So, in the absence of interior punctures, the additional structure of tagging is not needed at all to define the cluster algebra of the surface.  

\subsection{Formal definition}

In this section, we give a quick reminder of the construction of the cluster algebra of surfaces. For the general theory of cluster algebras, \cite{Wil14} is an excellent introduction. For the details of the tagged arcs and tagged triangulations, see \cite[Section 7]{FST08}. 

Let $\Delta$ be a tagged ideal triangulation on $\Sigma$. In other words, $\Delta$ is a maximal collection of pairwise compatible tagged arcs.\footnote{Here, two tagged arcs are said to be compatible if their untagged versions are disjoint except at $V$ and their taggings satisfy a technical condition if they share an endpoint: if the two arcs are not isotopic and meet at an endpoint, then their taggings at that end must be identical;  and if the two arcs are isotopic and meet at an endpoint, then at least one one of the two ends must be identical.}  Let $E$ be the set of edges in $\Delta$, and $I$ be a fixed index set for edges in $E$. We divide $I =I^{\Delta} =  I_{\circ} \sqcup I_{\partial}$, where $I_{\circ}$ is the index set for interior edges and $I_{\partial}$ is the index set for boundary edges. 

A \emph{seed} is a collection of data $(E, B)$, where $E$ is the set of edges in $\Delta$ and $B = (b_{ij})$ is an $I \times I$ matrix such that its entry is given by Figure \ref{fig:adjacency}, that is, a signed counting of the adjacency of two edges $x_{i}$ and $x_{j}$ on the triangulation, i.e. $x_i$ and $x_j$ are edges of an ideal triangle. Thus, $-2 \le b_{ij} \le 2$ and $B$ is a skew-symmetric matrix. The matrix $B$ is called the \emph{exchange matrix}. 

\begin{figure}
\includegraphics[height=0.06\textheight]{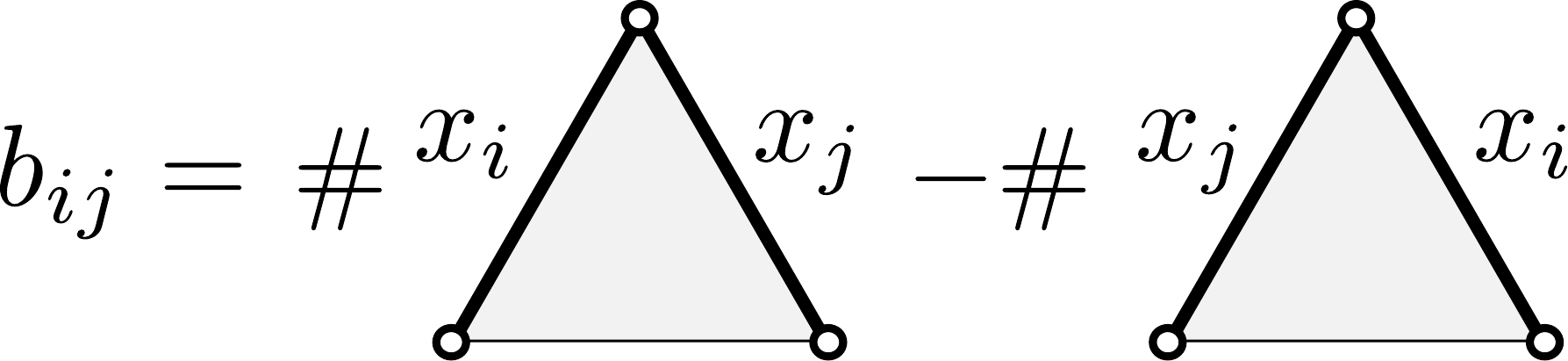}
\caption{Adjacency matrix $B$}
\label{fig:adjacency}
\end{figure}

For each choice of $k \in I_{\circ}$, we can construct a new seed in the following way.  Observe that there is a unique way to flip $x_{k}$ to another arc $x_{k}'$, corresponding to the procedure of Figure \ref{fig:Ptolemy} or \ref{fig:taggedflip}. 
We may thus replace $x_{k}$ by $x_{k}'$ in $\Delta$ to obtain a new tagged triangulation $\Delta'$. Let $E'$ denote the new set of edges in $\Delta'$. 
For edges $i \ne k$, $x_{i} = x_{i}'$.  In the cluster algebra, the \emph{exchange relation} relates the variables $x_{k}$ and $x_{k}'$ by 
\begin{equation}\label{eqn:exchangerelation}
	x_{k}x_{k}' = \prod_{b_{jk} > 0}x_{j}^{b_{jk}} + \prod_{b_{jk} < 0}x_{j}^{-b_{jk}}.
\end{equation}
The new exchange matrix $B' = (b_{ij}')$ for $\Delta'$ is then
\begin{equation}
	b_{ij}' := \begin{cases}
	-b_{ij}, & \mbox{ if } i = j \mbox{ or } j = k,\\
	b_{ij} + \frac{1}{2}(|b_{ik}|b_{kj} + b_{ik}|b_{kj}|), & \mbox{ otherwise}.
	\end{cases}
\end{equation}
The \emph{cluster mutation}  in the $k$-th direction is defined to be the map $\mu_{k} : (E, B) \to (E', B')$. One can check that this formal definition captures the combinatorial changes from flipping an edge. One can also check that $\mu_{k}$ is an involution, i.e., $\mu_{k}^{2} = \mathrm{id}$. 

Notice that boundary edges correspond to frozen variables.  In particular, cluster mutation is not defined for $k \in I_{\partial}$.

\begin{definition}\label{def:clusteralgebra}
Let $\Sigma$ be a triangulated surface with an initial seed $(E, B)$ coming from a fixed triangulation $\Delta$. The \emph{cluster algebra $\cA(\Sigma)$} is a subalgebra of $\CC(E)$ generated by 
\begin{equation}
\bigcup_{(E', B')}(\{x_{i}'|_{i \in I_{\circ}} \} \cup \{x_{j}'^{\pm1}|_{j \in I_{\partial}}\})
\end{equation}
for all cluster seeds obtained by taking a finite sequence of cluster mutations. 
\end{definition}

It is well-known that $\cA(\Sigma)$ does not depend on the choice of the initial seed. Thus, it is an invariant of a marked surface $\Sigma = (\underline{\Sigma}, V)$. 

\begin{example}\label{ex:oncepuncturedtorus}
Let $\Sigma_{1,1}$ be the torus with one interior puncture. We draw the torus using a standard square diagram as in Figure \ref{fig:torus}. We fix an ideal triangulation $\Delta$ with $E = \{x_1, x_2, x_3\}$. One may check that its exchange matrix is
\begin{equation}
    B = \left[\begin{array}{rrr}0&2&-2\\
    -2&0&2\\2&-2&0\end{array}\right].
\end{equation}
Flipping $x_3$, we obtain a new triangulation as in the right figure in Figure \ref{fig:torus}. This cluster mutation $\mu_3$ results in a new seed $(E' = \{x_1, x_2, x_3'\}, B')$, where 
\begin{equation}
    B' = \left[\begin{array}{rrr}0&-2&2\\
    2&0&-2\\-2&2&0\end{array}\right].
\end{equation}
\end{example}

\begin{figure}
\begin{tikzpicture}[scale=0.8]
	\draw[line width = 2pt] (0, 0) -- (0, 3);
	\draw[line width = 2pt] (0, 0) -- (3, 0);
	\draw[line width = 2pt] (3, 0) -- (3, 3);
	\draw[line width = 2pt] (0, 3) -- (3, 3);
    \draw[line width = 2pt] (0, 0) -- (3, 3);
    \node at (3.5, 1.5) {$x_1$};
    \node at (-0.5, 1.5) {$x_1$};
    \node at (1.5, -0.5) {$x_2$};
    \node at (1.5, 3.5) {$x_2$};
    \node at (1.7, 1) {$x_3$};
\end{tikzpicture}
\quad
\begin{tikzpicture}[scale=0.8]
	\draw[line width = 2pt] (0, 0) -- (0, 3);
	\draw[line width = 2pt] (0, 0) -- (3, 0);
	\draw[line width = 2pt] (3, 0) -- (3, 3);
	\draw[line width = 2pt] (0, 3) -- (3, 3);
    \draw[line width = 2pt] (3, 0) -- (0, 3);
    \node at (3.5, 1.5) {$x_1$};
    \node at (-0.5, 1.5) {$x_1$};
    \node at (1.5, -0.5) {$x_2$};
    \node at (1.5, 3.5) {$x_2$};
    \node at (1.3, 1) {$x_3'$};
\end{tikzpicture}
\caption{Once punctured torus}
\label{fig:torus}
\end{figure}
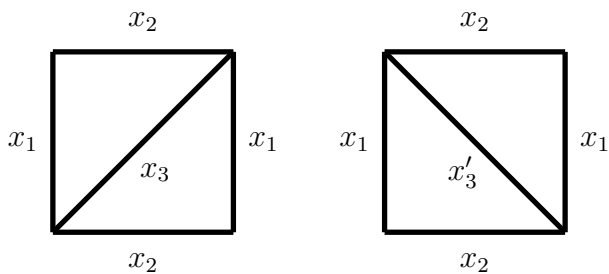

\begin{remark} 
When $\Sigma = \Sigma_{g, 1}$ is a surface with one interior puncture and without boundary, one can check that no triangulation admits any self-folded triangle as in the second Figure in Figure \ref{fig:noflip} and any arc is flippable. In particular, tagged arcs are unnecessary to define $\cA(\Sigma)$.
\end{remark}

\begin{theorem}[\protect{\cite[Theorem 7.11]{FST08}}]\label{thm:clusteralgebraofsurf}
Let $\Sigma$ be a triangulated surface with an initial seed $(E, B)$ coming from a fixed triangulation $\Delta$. Suppose that $\Sigma$ is not a once-punctured surface without boundary. Then the cluster algebra $\cA(\Sigma)$ is generated by arcs and tagged arcs. If $\Sigma$ is a once-punctured surface without boundary, or a surface without interior puncture, then $\cA(\Sigma)$ is generated by ordinary arc classes. 
\end{theorem}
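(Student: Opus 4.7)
The plan is to establish a bijection between cluster variables of $\cA(\Sigma)$ and tagged arcs on $\Sigma$, and then to specialize to the cases in the statement. First, I would verify that the combinatorial data $(E, B)$ of a tagged triangulation is compatible with cluster mutation in the sense that a tagged flip of an arc $x_k$ in $\Delta$ corresponds exactly to the cluster mutation $\mu_k$. Concretely, one checks that the Ptolemy-type relation obtained from flipping $x_k$ inside its surrounding (generalized) quadrilateral equals the exchange relation
\[
    x_k x_k' = \prod_{b_{jk} > 0} x_j^{b_{jk}} + \prod_{b_{jk} < 0} x_j^{-b_{jk}},
\]
where the signs in $B$ match the orientation-induced adjacency around the quadrilateral described in Figure \ref{fig:adjacency}. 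The tagged flip move near a self-folded triangle is defined precisely so that this correspondence extends to the subtle cases; verifying it for each local configuration in Figure \ref{fig:taggedflip} is the technical heart of the argument and is what occupies much of \cite{FST08}.

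Next, I would show that every tagged arc belongs to some tagged ideal triangulation, and that the flip graph on tagged triangulations is connected (away from the exceptional once-punctured case), so any tagged arc can be reached from the initial triangulation $\Delta$ by a finite sequence of tagged flips. Combined with the first step, this exhibits every tagged arc as a cluster variable of $\cA(\Sigma)$; conversely, every cluster variable arises in this way from iterated mutation, since Definition \ref{def:clusteralgebra} generates $\cA(\Sigma)$ from exactly such sequences of mutations together with the frozen boundary edges.

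Finally, I would address the two special cases. When $\Sigma$ has no interior puncture, no arc admits a notched tagging, so tagged arcs coincide with ordinary arcs and the statement follows immediately from the general bijection. For a once-punctured surface without boundary, no ideal triangulation contains a self-folded triangle (there is only one interior puncture, so no external vertex to fold onto), hence every interior arc is flippable as an ordinary arc and the flip graph on \emph{plain} ideal triangulations is already connected. The cluster algebra obtained from a plain seed therefore needs only plain arcs as generators; the analogous all-notched flip graph is disconnected from the plain one but yields an isomorphic cluster pattern via the involution exchanging the two taggings at the puncture, and so introduces no genuinely new cluster variables.

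The hard part will be the first step: verifying the compatibility of exchange relations with tagged flips in a neighborhood of a self-folded triangle, including the careful bookkeeping of signs in $B$ and the precise definition of the tagged flip in each local configuration. Once this technical backbone is in place, the remaining pieces---connectedness of the flip graph on tagged triangulations and the treatment of the two special cases---follow from standard surface combinatorics together with the tagging-symmetry argument in the once-punctured case.
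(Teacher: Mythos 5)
Your proposal is correct, but note that the paper itself offers no proof of this statement: it is imported verbatim as a citation of \cite[Theorem 7.11]{FST08}. Your outline — matching tagged flips with exchange relations, connectedness of the tagged flip graph, and the two special cases handled via the absence of notched tags (no punctures) and the absence of self-folded triangles plus the tagging involution (once-punctured closed case) — is a faithful sketch of the argument in that source, so it takes essentially the same approach as the proof the authors are relying on.
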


\begin{remark}\label{rem:generalclusteralgebra}
In this paper, we focus on the cluster algebra $\cA(\Sigma)$ of a triangulated surface $\Sigma$. However, the cluster algebra $\cA$ can be defined for more general situation. Suppose that we have a finite list of variables $E = \{e_{i}\}_{i \in I}$, a decomposition of the index set $I = I_{unf} \sqcup I_{fro}$ (Note that in the case of $\cA(\Sigma)$, $I_{unf} = I_\circ$ and $I_{fro} = I_\partial$.) and a skew-symmetric integral matrix $B$. Then $\cA$ is defined as subalgebra of $\CC(\{e_{i}\})$ that is generated by 
\begin{equation}
\bigcup_{(E', B')}(\{x_{i}'|_{i \in I_{unf}}\} \cup \{ x_{j}'^{ \pm1}|_{j \in I_{fro}}\})
\end{equation}
where $(E', B')$ is a cluster seed obtained by taking a finite sequence of cluster mutations. 
\end{remark}

\subsection{Upper cluster algebra}\label{ssec:uppercluster}

For each cluster algebra $\cA$, one may define another algebra, the so-called upper cluster algebra $\cU$, which seems to be more natural from the algebraic geometry viewpoint and behave better algebraically \cite{BFZ05}. In this section, we review the definition of the upper cluster algebra $\cU(\Sigma)$ in the context of a cluster algebra of a surface $\cA(\Sigma)$. 

Let $\Sigma$ be a triangulated marked surface and $\cA(\Sigma)$ be the cluster algebra associated to $\Sigma$. Recall that any two cluster seeds $(E, B)$ and $(E', B')$ are connected by a finite sequence of cluster mutations. So by applying the exchange relation \eqref{eqn:exchangerelation} finitely many times, we may algebraically relate $E$ with $E'$.  More specifically, if we denote $E = \{x_{i}\}$ and $E' = \{x_{i}'\}$, any $x_{i}$ can be written as a rational function with respect to $\{x_{i}'\}$. One can say more about the type of rational functions obtained in this way. 

\begin{theorem}[Laurent phenomenon \protect{\cite[Theorem 3.1]{FZ02}}]\label{thm:Laurentphenomenon}
For any two cluster seeds $(E = \{x_{i}\}, B)$ and $(E', \{x_{i}'\}, B')$ for $\cA$, 
\begin{equation}
	x_{i}' \in \CC[x_{1}^{\pm1}, x_{2}^{\pm1}, \cdots, x_{n}^{\pm1}].
\end{equation}
In other words, $x_{i}'$ is a Laurent polynomial with respect to any cluster seed. 
\end{theorem}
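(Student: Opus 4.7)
The plan is an induction on the length $n$ of the mutation sequence $\mu_{k_n}\circ\cdots\circ\mu_{k_1}$ producing $(E', B')$ from the initial seed $(E, B) = (\{x_1, \ldots, x_N\}, B)$. The base case $n = 0$ is immediate, and for $n = 1$ the exchange relation \eqref{eqn:exchangerelation} exhibits each variable of the new cluster as either an original $x_i$ or as
\[
	x_k' = x_k^{-1}\Bigl(\prod_{b_{jk}>0} x_j^{b_{jk}} + \prod_{b_{jk}<0} x_j^{-b_{jk}}\Bigr),
\]
which is manifestly a Laurent polynomial in $x_1, \ldots, x_N$.

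For the inductive step, let $(E'', B'')$ be obtained from $(E', B')$ by one further mutation $\mu_k$ with $k = k_{n+1}$. Every variable of $E''$ other than $x_k''$ coincides with some $x_i'$ and is Laurent in $\{x_i\}$ by hypothesis. For $x_k''$, substituting the inductive Laurent expressions into
\[
	x_k'' = (x_k')^{-1}\Bigl(\prod_{b'_{jk}>0} (x_j')^{b'_{jk}} + \prod_{b'_{jk}<0} (x_j')^{-b'_{jk}}\Bigr)
\]
produces an element of $\CC(x_1, \ldots, x_N)$; the theorem is the claim that this element already lies in $\CC[x_1^{\pm 1}, \ldots, x_N^{\pm 1}]$.

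The main obstacle, and the technical heart of the proof in \cite{FZ02}, is precisely this last divisibility step. The strategy is to compare the mutation path starting from $(E, B)$ with a parallel path starting from $\mu_{k_1}(E, B)$, which is itself a legitimate initial seed; this comparison, together with the involutivity $\mu_k^2 = \mathrm{id}$, is packaged in the so-called \emph{caterpillar lemma}. The crucial algebraic input is that the two monomial terms appearing on the right-hand side of the exchange relation for $x_k''$, once expanded in the initial cluster, are coprime as Laurent polynomials; this coprimality is proved by tracking inductively which initial variables may appear in the denominator of each $x_j'$. Once coprimality is in hand, unique factorization in the Laurent polynomial ring $\CC[x_1^{\pm 1}, \ldots, x_N^{\pm 1}]$, combined with the direct verification that $x_k'$ divides the numerator (seen from specializing $x_k' = 0$ and using the mutation formula for the exchange matrix $B'$), forces $x_k''$ to be Laurent and completes the induction.
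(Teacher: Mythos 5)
The paper does not actually prove this statement; it is imported verbatim from Fomin--Zelevinsky \cite[Theorem 3.1]{FZ02}, so there is no in-paper argument to compare against. Your outline is a recognizable summary of the strategy of that reference: induction on the length of the mutation sequence, reduction via the caterpillar lemma, and coprimality of the exchange binomials combined with unique factorization in the Laurent polynomial ring.

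As a proof, however, your write-up has genuine gaps. The two steps that carry all the difficulty --- the caterpillar lemma itself and the coprimality of the relevant elements after expansion in the initial cluster --- are named and asserted rather than established, and proving them is essentially the entire content of the theorem. One step is also stated imprecisely: you propose to see that $x_k'$ divides the numerator ``by specializing $x_k' = 0$,'' but $x_k'$ is not one of the initial variables $x_1, \dots, x_N$, so this specialization is not an operation inside $\CC[x_1^{\pm1}, \dots, x_N^{\pm1}]$. In \cite{FZ02} one instead works in the Laurent ring of an adjacent seed, where $x_k'$ \emph{is} a cluster variable, establishes Laurentness there, and transfers the conclusion back to the initial seed using the coprimality statements; this three-seed comparison is what the caterpillar lemma packages. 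If your intent is to record why the theorem holds and where its difficulty lies, the sketch is a fair roadmap; if the intent is to supply a proof, these steps must be filled in --- or the result should simply be cited, as the paper does.
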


\begin{definition}\label{def:upperclusteralgebra}	
Consider a cluster algebra $\cA \subset \CC(E)$. The \emph{upper cluster algebra} $\cU$ is defined as the set of all elements in $\CC(E)$ that can be written as a Laurent polynomial with respect to arbitrary seeds. In other words, 
\begin{equation}
	\cU := \bigcap_{(E' = \{x_{i}'\}, B')}\CC[x_{1}'^{ \pm1}, x_{2}'^{ \pm1}, \cdots, x_{n}'^{ \pm1}].
\end{equation}
\end{definition}

Now Theorem \ref{thm:Laurentphenomenon} implies that $\cA$ is a subalgebra of $\cU$. There are several cases that one can show $\cA = \cU$ \cite{Mul13, MSW13, CLS15, Mul16, MS16, IOS23}. However, in general, $\cU$ maybe strictly larger than $\cA$, and deciding whether $\cA = \cU$ or not is a challenging problem. See Section \ref{ssec:applications} for the case of $\cA(\Sigma)$.

\section{Compatibility between skein algebra and cluster algebra}\label{sec:compatibility}

We now prove Theorem \ref{thm:compatibility}, starting with a review of relevant definitions. 

\subsection{Cluster algebra in skein algebra}
Fix a tagged triangulation $\Delta$, or equivalently, a cluster seed $(E = \{x_{i}\}, B)$ of $\cA(\Sigma)$ where each $x_{i}$ corresponds to a tagged arc. Note that if a tagged arc $x_{i}$ ends at an interior puncture $v \in V_{\circ}$, then that end of $x_{i}$ can be decorated in one of two ways:  plain or notched. See Figure \ref{fig:taggedarcs}. For each tagged arc $\alpha$, let $\underline{\alpha}$ be its underlying topological arc after forgetting its tags. We can then understand $\underline{\alpha}$ as an element of $\cS_{1}^{\MRY}(\Sigma)$.

Consider the subalgebra $\CC[x_{i}|_{i \in I_{\circ}}, x_{j}^{\pm1}|_{j \in I_{\partial}}] \subset \CC(E)$.

\begin{definition}\label{def:maprho}
Fix a cluster seed $(E = \{x_{i}\}, B)$.  Suppose that $x_{i}$ is a tagged arc connecting $v$ and $w$ 
(not necessarily distinct). We set
\begin{equation}
	\rho_{\Delta}(x_{i}) := \begin{cases}
	\underline{x_{i}} & \mbox{ if both ends are tagged plainly,}\\
	v\underline{x_{i}} & \mbox{ if only the end at $v$ is notched,}\\
	w\underline{x_{i}} & \mbox{ if only the end at $w$ is notched,}\\
	vw\underline{x_{i}} & \mbox{ if both ends are notched.}
	\end{cases}. 
\end{equation}
By extend it using the universal property of the polynomial ring, we obtain an algebra homomorphism
\begin{equation}\label{eqn:localhom}
	\rho_{\Delta} : \CC[x_{i}|_{i \in I_{\circ}}, x_{j}^{\pm1}|_{j \in I_{\partial}}]
	\to \cS_{1}^{\MRY}(\Sigma)[\partial^{-1}].
\end{equation}
Note that any boundary edge $x_{j}$ is not tagged. Thus, $x_{j}^{\pm1}$ simply maps to $\underline{x_{j}}^{\pm1}$ and gives a well-defined element in $\cS_{1}^{\MRY}(\Sigma)[\partial^{-1}]$.
\end{definition}

The slogan for $\rho$ is: ``A tag is a vertex class.'' Whenever one has a notch on a tagged arc, replace the notched end by a vertex class. 

\begin{proof}[Proof of Part (1) of Theorem \ref{thm:compatibility}]
For each cluster seed, we have a well-defined homomorphism $\rho_{\Delta}$ in \eqref{eqn:localhom}. To prove the statement, we first show that these homomorphisms are compatible: If $\Delta$ and $\Delta'$ are connected by a single flip, then for any element in 
\begin{equation}
	\CC[x_{i}|_{i \in I_{\circ}}, x_{j}^{\pm1}|_{j \in I_{\partial}}]
	\cap \CC[x_{i}'|_{i \in I_{\circ}}, x_{j}'^{ \pm1}|_{j \in I_{\partial}}], 
\end{equation}
the image of $\rho_{\Delta}$ and $\rho_{\Delta'}$ coincide. Thus, we may `glue' the two homomorphisms to get $\rho$. 

One may describe all possible tagged flips and for each one, calculating the mutation explicitly. It was done in \cite[Proposition 4.3]{MW24} using puzzle pieces. The paper covered only surfaces without boundary, but note that we do not flip any boundary edge, so the same list of cases covers all possible cases here, too. 

To give a flavor of the type of computation, we now reproduce the proof using puzzle pieces for one particular case. Consider the tagged flip $\Delta \to \Delta'$ described in Figure \ref{fig:taggedflipexample}. For this flip, the only nontrivial change is made for $x$, and by the exchange relation \eqref{eqn:exchangerelation}, we obtain $xx' = x_{2} + x_{3}$. On the other hand, the puncture-skein relation in Definition \ref{def:MRY} (after setting $q = 1$) provides $v_2\underline{x}\underline{x'} = \underline{x_{2}}+\underline{x_{3}}$. Therefore, 
\begin{equation}
	\rho(xx') = v_2\underline{x}\underline{x'} = 
	\underline{x_{2}}+\underline{x_{3}} = \rho(x_{2}+x_{3}).
\end{equation}

\begin{figure}
\includegraphics[height=0.12\textheight]{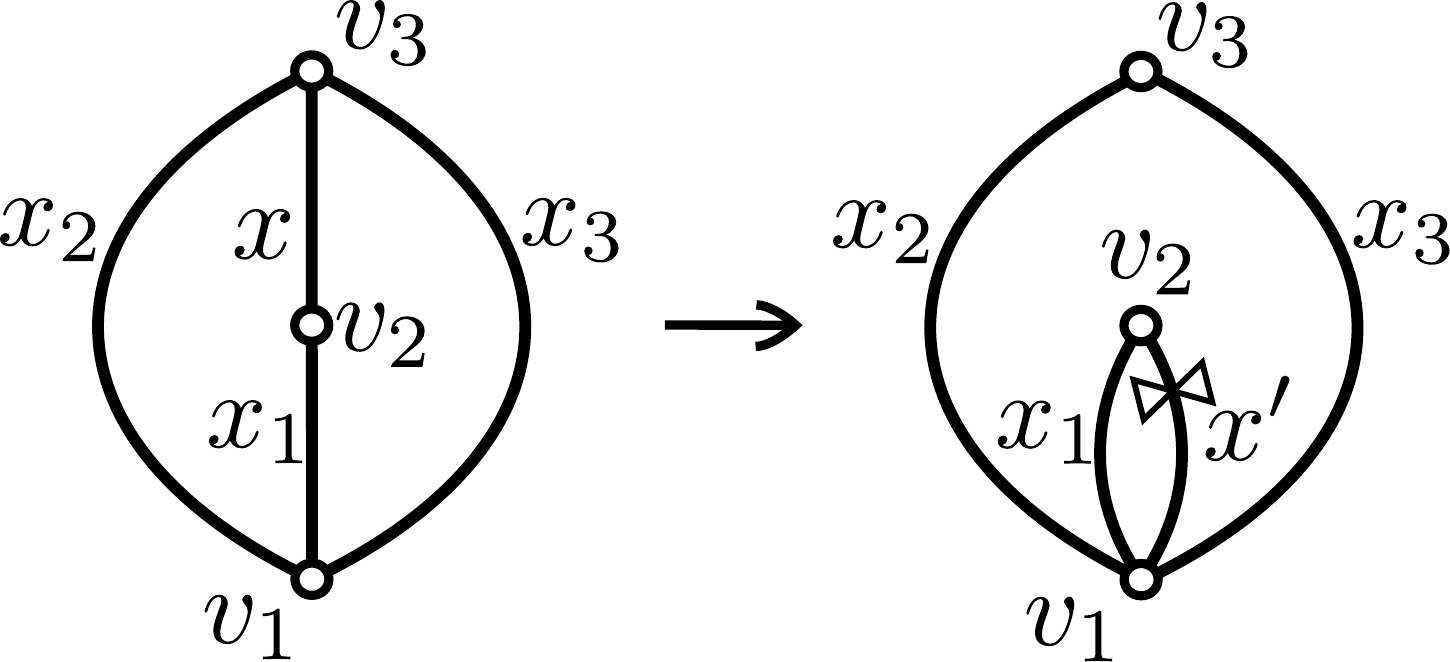}
\caption{A tagged flip of an edge of triangulation}
\label{fig:taggedflipexample}
\end{figure}

With similar calculations on other cases, we obtain a well-defined morphism $\rho : \cA(\Sigma) \to \cS_{1}^{\MRY}(\Sigma)[\partial^{-1}]$. To finish the proof, we need to show that $\rho$ is a monomorphism. 

Fix an ordinary triangulation $\Delta$ whose set of edges is $E = \{x_{i}\}$. We claim that both $\cA(\Sigma)$ and $\cS_{q}^{\MRY}(\Sigma)[\partial^{-1}]$ admit a morphism to the Laurent polynomials in $\{ x_{i}\}$, 
\[
	L _{\Delta}:= \CC[x_{1}^{\pm1}, x_{2}^{\pm1}, \cdots, x_{n}^{\pm1}].
\]
For $\cA(\Sigma)$, the Laurent phenomenon of Theorem \ref{thm:Laurentphenomenon} provides an inclusion $\cA(\Sigma) \subset L_\Delta$. We denote the inclusion map by $\iota$. For $\cS_{1}^{\MRY}(\Sigma)$, one needs to show that any vertices and loop classes are Laurent polynomials. The computation for the loop classes and general arc classes not necessarily in $\Delta$ is well-known in skein theory literature. For instance, see \cite[Theorem 3.22]{RY14}. Applying puncture-skein relation, any interior puncture is also a Laurent polynomial in $E$. For instance, in Figure \ref{fig:taggedflipexample}, $v_2\underline{x}\underline{x'} = \underline{x_{2}} + \underline{x_{3}}$. 

Thus, we have the following commutative diagram of commutative algebras:
\begin{equation}
	\xymatrix{\cA(\Sigma) \ar[rr]^{\rho} \ar[rd]^{\iota} &&\cS_{1}^{\MRY}(\Sigma)[\partial^{-1}]
	\ar[ld]_{\iota'}\\
	& L_{\Delta}.}
\end{equation}
Since $\iota = \iota' \circ \rho$ is injective, $\rho$ is injective, too. 
\end{proof}

\begin{remark}
Note that the above proof does not claim that $\iota'$ is also injective. We believe it has to be shown separately. If any edge is not a zero divisor, then the localization map $\iota'$ is injective. But how do we know that the multiplication of an arc on $\cS_{1}^{\MRY}(\Sigma)[\partial^{-1}]$ is injective? One way to show it is using dimension theory from algebraic geometry. Apply an algebraic ring extension $\widetilde{\cA}(\Sigma)$ of $\cA(\Sigma)$ in $\CC(E)$ and construct an epimorphism $\widetilde{\rho} : \widetilde{\cA}(\Sigma) \to \cS_{1}^{\MRY}(\Sigma)[\partial^{-1}]$. Since $\widetilde{\rho}$ is an epimorphism from an integral domain with the same dimension, its kernel is trivial \cite[Theorem 5.2]{MW24}. Thus, $\widetilde{\cA}(\Sigma) \cong \cS_{1}^{\MRY}(\Sigma)[\partial^{-1}]$ and the latter is an integral domain, and any nonzero element is not a zero divisor. 
\end{remark}

Observe that the above proof tells us that any loop class in $\cS_{1}^{\MRY}(\Sigma)[\partial^{-1}]$ is a Laurent polynomial with respect to any fixed ordinary triangulation. The same Laurent polynomial formula also applies to tagged triangulation \cite[Proposition 3.15]{MSW11}. It thus follows that all loop classes are in $\cU(\Sigma)$.  However, even though each vertex class is a Laurent polynomial with respect to an ordinary triangulation, it is not with respect to a tagged triangulation. Therefore, it does not belong to $\cU(\Sigma)$. 

\begin{definition}\label{def:squarealgebra}
Let $\cS_{q}^{\square}(\Sigma)$ be a subalgebra of $\cS_{q}^{\MRY}(\Sigma)[\partial^{-1}]$ generated by:
\begin{enumerate}
\item loop classes;
\item all arc classes;
\item all formal inverses of boundary arc classes;
\item if $\beta$ is an arc class connecting two (non-necessarily distinct) interior punctures $v$ and $w$; $v\beta$, $w\beta$, and $vw\beta$;
\item if $\beta$ is an arc class connecting an interior puncture $v$ and a boundary marked point, $v\beta$. 
\end{enumerate}
\end{definition}

\begin{proof}[Proof of Part (2) of Theorem \ref{thm:compatibility}]
When $q = 1$, we may define $\cS_{1}^{\square}(\Sigma)$ in a simpler way: It is a subalgebra of $\cS_{1}^{\MRY}(\Sigma)[\partial^{-1}]$ generated by the image of $\rho$ and the loop classes. Since $\cS_{1}^{\square}(\Sigma)$ is generated by $\rho(\cA(\Sigma))$ and the loop classes, any element in $\cS_{1}^{\square}(\Sigma)$ is in $\cU(\Sigma)$. 
\end{proof}

We also obtain the following structural results:

\begin{proposition}\label{prop:skeinstructure}
Let $\Sigma$ be a marked surface.
\begin{enumerate}
\item $\cS_{q}^{\square}(\Sigma)$ is finitely generated. 
\item $\cS_{1}^{\square}(\Sigma)$ is an integral domain. 
\end{enumerate}
\end{proposition}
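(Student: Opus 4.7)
For part (1), the plan is to produce an explicit finite $\CC$-algebra generating set for $\cS_q^\square(\Sigma)$ by augmenting the Bullock-style simple-curve generators from the proof of Theorem \ref{thm:finitegeneration} with their products against the interior vertex classes. Concretely, let $\mathcal{T} = \{T_1,\ldots,T_N\}$ be the finite collection of simple loops and simple arcs (each traversing every handle of $\Sigma \setminus \mathrm{int}\;D$ at most once) produced there, and set
\[
\mathcal{G} := \mathcal{T} \;\cup\; \{T^{-1} : T \in \mathcal{T} \text{ is a boundary arc}\} \;\cup\; \{vT, wT, vwT : T \in \mathcal{T} \text{ an arc with interior puncture endpoints } v, w\}.
\]
This set is visibly finite, and I would verify that each of the five types of generators from Definition \ref{def:squarealgebra} lies in the $\CC$-algebra generated by $\mathcal{G}$.

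The core observation is that Bullock's handle moves take place entirely away from the interior punctures and so invoke only the Kauffman relation (A), never the puncture relation (D); consequently the reduction of any simple arc $\beta$ with endpoints $v, w \in V$ yields $\beta = \sum_I c_I \prod_j \gamma_{I,j}$ with $c_I \in \CC$ and each $\gamma_{I,j} \in \mathcal{T}$. A boundary-count shows that in each term exactly one factor $\gamma_{I, j_0}$ is an arc from $v$ to $w$, while the remaining factors are loops. Because $v$ is central in $\cS_q^\MRY(\Sigma)$, the product $v \beta = \sum_I c_I (v \cdot \gamma_{I, j_0}) \prod_{j \neq j_0} \gamma_{I,j}$, where each $v \cdot \gamma_{I, j_0}$ lies in $\mathcal{G}$ by construction; the arguments for $w\beta$, $vw\beta$, and for the mixed interior-boundary case (e) are analogous. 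The main obstacle I anticipate is the topological bookkeeping needed to confirm that each Bullock resolution (Figures \ref{fig:reduction1}, \ref{fig:reduction2}) applied to a simple arc genuinely produces disjoint unions whose arc component has the same endpoint pair as the original, so that the vertex classes in $\mathcal{G}$ really do absorb the coefficient factors.

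For part (2), the plan is to reduce the statement to the integral-domain property of the ambient algebra $\cS_1^\MRY(\Sigma)[\partial^{-1}]$, since any subalgebra of an integral domain is itself an integral domain. Commutativity at $q = 1$ is immediate, so only the absence of zero divisors requires work. I would invoke the argument already sketched in the Remark following the proof of Part (1) of Theorem \ref{thm:compatibility}: one constructs an algebraic ring extension $\widetilde{\cA}(\Sigma) \subset \CC(E)$ of the cluster algebra together with an epimorphism $\widetilde{\rho}\colon \widetilde{\cA}(\Sigma) \twoheadrightarrow \cS_1^\MRY(\Sigma)[\partial^{-1}]$ that extends $\rho$, and then a dimension-count forces $\widetilde{\rho}$ to be an isomorphism. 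Since $\widetilde{\cA}(\Sigma)$ is a subring of the rational function field $\CC(E)$, it is an integral domain; this property transfers along the isomorphism to $\cS_1^\MRY(\Sigma)[\partial^{-1}]$ and restricts to the subalgebra $\cS_1^\square(\Sigma)$. The one subtlety requiring care is to verify that this ambient integral-domain property is obtained independently of the injectivity step used in Theorem \ref{thm:compatibility}, in order to avoid circularity.
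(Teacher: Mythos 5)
Your part (2) is correct but takes a much heavier route than necessary. The paper's proof is one line: by Theorem \ref{thm:compatibility}(2), $\cS_{1}^{\square}(\Sigma) \subset \cU(\Sigma)$, and $\cU(\Sigma)$ is by definition a subalgebra of the rational function field $\CC(E)$, hence an integral domain; a subalgebra of a domain is a domain. You instead propose to prove that the whole ambient algebra $\cS_{1}^{\MRY}(\Sigma)[\partial^{-1}]$ is a domain via the $\widetilde{\cA}(\Sigma)$ dimension-theory argument from the Remark. That would suffice, but it is a strictly stronger statement whose proof is only sketched in the paper (and relies on \cite[Theorem 5.2]{MW24}, proved there for closed surfaces); the circularity worry you raise is real for that route and is exactly why the paper avoids it here. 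The inclusion into $\cU(\Sigma)$ sidesteps all of this.

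Part (1) contains a genuine gap. Your ``core observation'' --- that the reduction of a simple arc invokes only the Kauffman relation (A) and never the puncture relation (D) --- is false in general, and it is precisely the false part that carries the real content of the statement. The finite-generation argument of Section \ref{sec:finitegeneration} has two stages: first, Lemma \ref{lem:pushout} pushes curves out of the disk $D$ containing the interior punctures, and this stage explicitly applies the puncture-skein relation (D) (and relation (C)); only the second stage, the Bullock handle reduction on $\Sigma \setminus \mathrm{int}\, D$, is free of puncture relations. Relation (D) does two things that break your bookkeeping: it introduces coefficients $v^{\pm 1}$ from the ring $\CC[v_i^{\pm 1}]$, and it creates or removes arc endpoints at punctures, so the terms in the expansion of an arc from $v$ to $w$ need not each contain ``exactly one arc from $v$ to $w$'' --- they can involve the segments $\beta_{ij}$ between punctures $v_i, v_j$ unrelated to the original endpoints. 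Since $\cS_q^{\square}(\Sigma)$ is a $\CC$-subalgebra (not a $\CC[v_i^{\pm1}]$-subalgebra --- otherwise items (4) and (5) of Definition \ref{def:squarealgebra} would be vacuous), one must actually verify that every $v^{\pm 1}$ produced by relation (D) is absorbed by generators of types (4) and (5), i.e., that the reduction process applied to a generator of $\cS_q^{\square}(\Sigma)$ lands in $\cS_q^{\square}(\Sigma)$ and not merely in $\cS_q^{\MRY}(\Sigma)[\partial^{-1}]$. That verification is the entire point of the paper's proof of (1), and your argument does not address it. Separately, your set $\mathcal{T}$ omits the generators supported in the disk $D$ (the arcs $\beta_{ij}$ and the finite generating set of $\cS_q^{\MRY}(D_n)$ from \cite[Section 5]{ACDHM21}), which are needed whenever $V_{\circ} \neq \emptyset$; your argument as written covers only curves that never enter $D$.
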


\begin{proof}
One may note that if one start with a generator in $\cS_{q}^{\square}(\Sigma)$, the reduction process described in Section \ref{sec:finitegeneration} makes elements in $\cS_{q}^{\square}(\Sigma)$ only. Thus, the same proof of the finite generation works for $\cS_{q}^{\square}(\Sigma)$. This proves (1). 

Item (2) is an immediate consequence of Theorem \ref{thm:compatibility} because $\cU(\Sigma)$ is an integral domain. 
\end{proof}

\subsection{Applications of Compatibility}\label{ssec:applications}

There are a few applications to the structure of $\cA(\Sigma)$. 

In general, it has been observed that the ordinary cluster algebra $\cA$ may behave badly, but its upper cluster algebra has better structure. For instance, in many cases, $\cU$ is a finitely generated algebra, hence it defines an affine irreducible algebraic variety $\mathrm{Spec}\; \cU$. (However, there are some infinitely generated examples \cite{Spe13, GHK15}.) 
Moreover, $\cU$ is integrally closed \cite[Proposition 2.1]{Mul13}, so $\mathrm{Spec}\; \cU$ is a normal variety, which means its singularity type is relatively mild (the singular locus is of codimension $\ge 2$, etc.). So one may ask if $\cA = \cU$, for a given cluster algebra. 

In the case of cluster algebra $\cA(\Sigma)$ of surfaces, if there is at least one boundary component, then $\cA(\Sigma) = \cU(\Sigma)$ \cite{Mul13, MSW13, CLS15, MS16}. On the other hand, if $\Sigma$ is a surface without boundary but with one interior puncture, then $\cA(\Sigma) \ne \cU(\Sigma)$ \cite{Lad13}, which was shown by constructing an explicit element in $\cU(\Sigma) \setminus \cA(\Sigma)$.  

An application of the compatibility between $\cA(\Sigma_{g, n})$ and $ \cS_{1}^{\MRY}(\Sigma_{g, n})$ can be used to show that $\cA(\Sigma_{g, n}) \neq \cU(\Sigma_{g, n})$ when $g \geq 1$ \cite[Section 6]{MW24}. We summarize the argument here. Using techniques from invariant theory with $\ZZ/2\ZZ$-coefficients, it was shown that  $\cA(\Sigma_{g, n})$ with  $g,n \ge 1$ cannot be generated by finitely many elements \cite[Theorem C]{MW24}. By way of contradiction, assume $\cA(\Sigma_{g, n}) = \cU(\Sigma_{g, n})$. Since $\cA(\Sigma_{g, n}) \subset \cS_1^\square(\Sigma_{g, n}) \subset \cU(\Sigma_{g, n})$, this would imply that $\cA(\Sigma_{g, n}) = \cS_{1}^{\square}(\Sigma_{g, n})$. But $\cS_{1}^{\square}(\Sigma_{g, n})$ is finitely generated (Proposition \ref{prop:skeinstructure}), a contradiction. 

In the case of $\Sigma_{0, n}$, the $n$-punctured sphere, it was a folklore conjecture that $\cA(\Sigma_{0, n})$ is finitely generated. The computation in \cite{ACDHM21} indeed implies that $\cA(\Sigma_{0,n}) = \cS_{1}^{\square}(\Sigma_{0, n})$. Therefore, the finite generation of $\cA(\Sigma_{0, n})$ follows. We do not know if $\cA(\Sigma_{0, n}) = \cU(\Sigma_{0,n})$ or not. 

The finite generation problem for $\cA(\Sigma)$ itself is an interesting problem. As we mentioned earlier, it was shown that $\cA(\Sigma_{g, n})$ is not finitely generated when $g, n \ge 1$ \cite[Section 6]{MW24}. 
Note that from Proposition \ref{prop:skeinstructure} and Theorem \ref{thm:compatibility}, when $\cA(\Sigma) = \cU(\Sigma)$, we immediately obtain the finite generation of $\cA(\Sigma)$. The previous results and the discussion above show that if the given triangulated surface $\Sigma$ has boundary or $\Sigma = \Sigma_{0,n}$, $\cA(\Sigma)$ is finitely generated. Therefore, we have a complete understanding on the finite generation problem for $\cA(\Sigma)$.

\begin{remark}  
The skein algebra $\cS_{q}(\Sigma)$ and its variations are non-commutative (except for some small surfaces), and one can think of them as `quantum' versions of the commutative $\cS_{1}(\Sigma)$.   Similarly, one may wonder if we can obtain a `quantum' analogue $\cA_{q}(\Sigma)$ of $\cA(\Sigma)$. More generally, for a cluster algebra $\cA$ whose exchange matrix is of full-rank, its deformation quantization $\cA_{q}(\Sigma)$ was studied in \cite{BZ05} and called the quantum cluster algebra. However, if the exchange matrix is not of full-rank, it is not possible to obtain $\cA_{q}(\Sigma)$ \cite[Proposition 3.3]{BZ05}. It is also well-known that the exchange matrix $B$ for $\Sigma$ is not of full-rank if $\Sigma$ has an interior puncture. Thus, for a surface with an interior puncture, $\cA_{q}(\Sigma)$ does not provide a desired deformation quantization. 

On the other hand, note that $\cS_{q}^{\MRY}(\Sigma)$ can be defined for any oriented surface $\Sigma$ and it is a quantization of $\cS_{1}^{\MRY}(\Sigma)$. Therefore, from Theorem \ref{thm:compatibility}, we may understand $\cS_{q}^{\MRY}(\Sigma)$ as a quantization of $\cA(\Sigma)$. 
\end{remark}

\section{Open questions} \label{sec:open}

In this last section, we leave a few open questions on the structures of cluster algebras and skein algebras of surfaces.

\subsection{Algebraic structure of skein algebras}\label{ssec:presentation}

By Theorem  \ref{thm:finitegeneration},  $\cS_{q}^{\MRY}(\Sigma)[\partial^{-1}]$ and its variations are all finitely generated algebras.  However, numerous questions remain about its multiplicative structure.  For example, a presentation of the skein algebra is currently known only for a few small surfaces.  

\begin{question}
\begin{enumerate}
\item Compute a presentation of $\cS_{q}^{\MRY}(\Sigma)[\partial^{-1}]$. 
\item Find a minimal number of generators of $\cS_q^\MRY(\Sigma)[\partial^{-1}]$.
\end{enumerate}
\end{question}

 To the best of our knowledge, the presentation has been computed for the once-puncture torus $\Sigma_{1, 1}$ \cite{BPKW16} and for the $n$-punctured sphere $\Sigma_{0, n}$ \cite{ACDHM21}. Note that in these two cases, there is no boundary component, so the localization is not necessary. Even though it is not explicitly stated, the case of a disk with $n$ boundary marked points can be obtained from \cite[Section 12]{FZ03}. The case of the annulus with one marked point on each boundary component appears in \cite{Le15}.
 
Recall that $D_n$ is an $n$-punctured open disk. Note that when $q = 1$, the commutative algebra $\cS_{1}^{\MRY}(D_{n})$ naturally appears in many different contexts, including in classical invariant theory, as the coordinate ring of the Grassmannian, and in tropical geometry. Consult \cite[Remark 5.2]{ACDHM21} for additional discussion.

\begin{question}
Identify $\cS_q^\MRY(\Sigma)$ for a simple surface $\Sigma$ with other algebraic structures. 
\end{question}

One may investigate other algebraic properties of $\cS_q^\MRY(\Sigma)$. It has been known that $\cS_q^\MRY(\Sigma)$ is a domain \cite{BKL24} as a corollary of an embedding into a quantum torus, and its center has been calculated when $q$ is a primitive root of unity of odd order \cite{KMW25}. A similar result was obtained for $Z(\cS_q^{L+}(\Sigma))$ in \cite{Yu23a} and for $Z(\overline{\cS}_q^{L+}(\Sigma))$ in \cite{Kor21}. From its connection to the cluster algebra, it has been conjectured that $\cS_q^\MRY(\Sigma)$ has a `positive basis,' that is, a $\mathbb{Z}[q^{\pm1/2}][v_{i}^{\pm1}]$-basis whose multiplicative structure constants are all in $\mathbb{Z}_{\geq0}[q^{\pm1/2}][v_{i}^{\pm1}]$ \cite{Thu14} after we replace $\CC$ with $\mathbb{Z}[q^{\pm1/2}]$ in the ground ring. See related results in \cite{DM21, MQ23}.

\begin{question}
\begin{enumerate}
\item Do the generalized skein algebras each admit a positive basis? 
\item Compute the structure constants and describe them geometrically. 
\end{enumerate}
\end{question}

For a partial evidence for ${\cS}_1^\mathrm{RY} (\Sigma)$, see \cite{Kar24}. 

\subsection{Upper cluster algebra}
From Section \ref{sec:compatibility}, we have that $\cA(\Sigma) \subset \cS_{1}^{\square}(\Sigma) \subset \cU(\Sigma)$.  
Currently, there are no known elements in $\cU(\Sigma)$ but not in $ \cS_{1}^{\square}(\Sigma)$. This suggests:

\begin{question}
Is it true that $\cS_{1}^{\square}(\Sigma) = \cU(\Sigma)$?
\end{question}

Between $\cA(\Sigma)$ and $\cU(\Sigma)$, there is another algebra motivated by mirror symmetry, so-called the \emph{mid cluster algebra} $\cM(\Sigma)$ \cite{GHKK18}. It was shown that $\cS_{1}^{\square}(\Sigma) = \cM(\Sigma)$  \cite[Theorem 1.3]{MQ23}. It is not known whether or not $\cM(\Sigma)$ and $ \cU(\Sigma)$ are equal. 

Observe that if   $\cS_{1}^{\square}(\Sigma) = \cU(\Sigma)$, Proposition \ref{prop:skeinstructure} implies that $\cU(\Sigma)$ is finitely generated. 

This compatibility of $\cA(\Sigma)$ and $\cS_q^\MRY(\Sigma)$ seems to be a part of more general phenomenon. One may understand our $\cA(\Sigma)$ as the theory with the structure group $\mathrm{SL}_2$, in the framework of higher Teich\"muller theory in \cite{FG06}. For some other structure groups of row rank, there are similar results \cite{IOS23, IY23}. It suggests that for a higher structure group, where the skein theory is not clear, the skein algebra is expected to be `defined' as the upper cluster algebra.

\subsection{Representation theory of skein algebra}

A natural next step would be to study the representation theory of $\cS_{q}^{\MRY}(\Sigma)$.  There have been some investigations for quantum cluster algebras and localized Muller skein algebras with boundary edges  \cite{MNTY24, Kor21, Kor22, KK23}.  However, less is known about the representation theory for variations of the skein algebra in the presence of interior punctures, as in the Roger-Yang skein algebra.  

Based on techniques adapted from these earlier works,  \cite[Proposition 5.5]{KMW25} showed that when $q$ is an odd primitive root of unity, $\cS_{q}^{\MRY}(\Sigma)$ (hence $\cS_{q}^{\MRY}(\Sigma)[\partial^{-1}]$ too) is an \emph{almost Azumaya algebra}. This means `most' of irreducible representations can be identified with points in an open dense subset $U \subset \mathrm{MaxSpec}\; Z(\cS_{q}^{\MRY}(\Sigma))$. See \cite[Section 5]{KMW25} for more details. The open set $U$ is called the \emph{Azumaya locus} of $\mathrm{MaxSpec}\; Z(\cS_{q}^{\MRY}(\Sigma))$. The center $Z(\cS_{q}^{\MRY}(\Sigma))$ was calculated in \cite[Theorem A]{KMW25}. However, we do not yet have an explicit description of the representations of $\cS_{q}^{\MRY}(\Sigma)$.

\begin{question}
\begin{enumerate}
\item Describe the Azumaya locus $U \subset \mathrm{MaxSpec}\; Z(\cS_{q}^{\MRY}(\Sigma))$ explicitly. 
\item For each point $m \in U$, construct the corresponding finite dimensional irreducible representation $V$ of $\cS_{q}^{\MRY}(\Sigma)$ geometrically. 
\end{enumerate}

\end{question}

The ordinary skein algebra $\cS_q(\Sigma)$ can be regarded as a quantization of the Teichm\"uller space of $\Sigma$, and its representations are closely related to hyperbolic geometric data of the surface $\Sigma$. Similarly, since $\cS_{q}^{\MRY}(\Sigma)$ is based on combinatorial data from the decorated Teichm\"uller space of $\Sigma$ by extending the discussion in \cite{RY14, Mul16}, it would be interesting to see how the representations of $\cS_{q}^{\MRY}(\Sigma)$ are related to the hyperbolic geometry of $\Sigma$ as well.

%%%%%%%%%%%%%%%%%%%%%%%%%%%%%%%%%%%%

\end{document}